\documentclass[11pt]{amsart}

\usepackage{amssymb,amsthm,amsmath}

\usepackage{mathtools}

\usepackage{hyperref}
\RequirePackage[dvipsnames,usenames]{xcolor}

\input{kmacros3.sty}

\usepackage{tikz}
\usepackage{tikz-cd}
\usetikzlibrary{cd}
\usepackage{graphicx}
\usepackage[all,cmtip]{xy}

\usepackage{mabliautoref}

\usepackage{bm}
\usepackage{pifont}
\usepackage{upgreek}

\usepackage{eucal}
\usepackage{ulem}
\usepackage{stmaryrd}

\numberwithin{equation}{theorem}

\usepackage{fullpage}

\usepackage{enumerate}
\usepackage{calc}

\usepackage{verbatim}
\usepackage{alltt,enumitem}
\usepackage{scalerel,stackengine}

\renewcommand{\m}{\mathfrak{m}}
\newcommand{\q}{\mathfrak{q}}

\title{\texorpdfstring{$F$}{F}-injectivity does not deform}
\author{Karl Schwede}
\address{Department of Mathematics, Unviersity of Utah, Salt Lake City, 84112 USA}
\email{schwede@math.utah.edu}
\urladdr{\url{https://kschwede.github.io/}}
\author{Austyn Simpson}
\address{Department of Mathematics, Bates College, 3 Andrews Rd, Lewiston, ME 04240 USA}
\email{asimpson2@bates.edu}
\urladdr{\url{https://austynsimpson.github.io/}}

\begin{document}
\begin{abstract}
    We show that there exists an $F$-finite four-dimensional local domain $(R,\mathfrak{m})$ of characteristic two which is not $F$-injective but which admits a nonzerodivisor $f\in \mathfrak{m}$ such that $R/fR$ is $F$-injective.  
\end{abstract}
\maketitle
\vspace*{-10pt}
\section{Introduction}
 A common strategy for showing that the singularities of a variety $X$ have certain mildness properties is to show that a (reduced) Cartier divisor $H$ on $X$ has those properties; there is a suite of so-called \emph{deformation} or \emph{inversion of adjunction} results one can then leverage to conclude that $X$ inherits those properties near $H$. In local commutative-algebraic language, this phenomenon is captured by the singularities of a Noetherian local ring $(R,\m)$ often being no worse than those of $R/fR$, where $f\in\m$ is a nonzerodivisor.

This philosophy has been extremely well-studied for the singularities of complex birational algebraic geometry, arguably beginning with Elkik's proof that rational singularities deform \cite{ElkikDeformationsOfRational}. Similar results hold for Du Bois singularities by \cite{KovacsSchwedeDBDeforms}, which are generalizations of both rational and log canonical singularities.

 This line of inquiry in the context of $F$-singularities in prime characteristic was initiated in 1983 by Fedder in his study of $F$-purity \cite{FedderFPureRat}, a notion which was then (mistakenly) conjectured to correspond to rational singularities in characteristic zero. Fedder proved in \emph{op. cit.} that $F$-purity need not deform in general, but that the closely related weaker notion of $F$-injectivity \emph{does} deform in a Cohen--Macaulay ring. Recall that a $d$-dimensional local ring $(R,\m)$ of characteristic $p > 0$ is \emph{$F$-injective} if the Frobenius actions $F:H^i_\m(R)\to H^i_\m(R)$ on local cohomology are injective for all $0\leq i\leq d$.

In the intervening decades, the deformation phenomena enjoyed by the Frobenius singularities have largely been found to mirror the picture of complex birational algebraic geometry. For instance, \emph{$F$-rational} singularities---which were subsequently found to be the correct counterparts of rational singularities by \cite{HaraRatImpliesFRat,MehtaSrinivasRatImpliesFRat, SmithFRatImpliesRat}---do deform by \cite{HochsterHunekeFRegularityTestElementsBaseChange}. On the other hand, log canonical and log terminal (respectively $F$-pure and $F$-regular) singularities do not deform in general, but do deform if the total space has a canonical divisor $K_X$ which is $\mathbb{Q}$-Cartier (see \cite{SinghFRegularityDoesNotDeform,KawakitaInversion,das_different_different_different,PST25}).

Beyond the Cohen--Macaulay case, the $F$-injective version of these theorems saw some partial progress but otherwise remained elusive. For example, if $R$ is excellent and $f\in\m$ is a nonzerodivisor such that $R/fR$ is $F$-injective, then $R$ is known to be $F$-injective provided that
\begin{enumerate}[label=(\roman*)]
    \item $R/fR$ is $F$-pure \cite[Corollary 4.13]{HoriuchiMillerShimomoto};\label{intro-1}
    \item $R/fR$ is either $F$-anti-nilpotent, or $F$-full \cite[Corollary 3.8]{MaQuyFrobeniusActionsAndDeformation};\label{intro-2}
    \item $H^i_\m(R)$ admits a Frobenius-stable secondary representation for all $0\leq i\leq d$ \cite{DM22};\label{intro-3}
    \item $\dim R\leq 4$, $R/fR$ is normal, and the residue field $R/\m$ is perfect \cite[Corollary 4.8]{HoriuchiMillerShimomoto};
    \item $R_f = R[f^{-1}]$ is Cohen-Macaulay \cite[Corollary A.5]{HoriuchiMillerShimomoto}.
\end{enumerate}
Most of these follow by showing that the relevant hypotheses ensure that the multiplication-by-$f$ maps
\begin{equation}
     H^i_\m(R)\stackrel{\cdot f} \to H^i_\m(R)\label{num.multByFSurjects}
\end{equation}
are surjective for all $0\leq i\leq\dim(R)$, at which point the diagram chases of \cite{FedderFPureRat,ElkikDeformationsOfRational} apply. Other work closely related to the deformation question for $F$-injectivity can be found, for instance, in \cite{HashimotoCMFinjectiveHoms,DattaMurayamaPermanencePropertiesFinjectivity,QS17,Mur22,DS24}. We show that the deformation question has a negative answer in general.
\begin{theorem}[{\autoref{cor.Main}, \autoref{prop.SpecialFiberIsFInjective}, \autoref{thm.AIsNotFInjective} }]\label{thm:main-thm}
    Let $k=\mathbb{F}_2(a)$. There exists a four-dimensional $F$-finite local domain $(R,\m)$, essentially of finite type over the DVR $D:=k[s]_{(s)}$, which is not $F$-injective but such that the quotient $R/sR$ by the nonzerodivisor $s$ is $F$-injective. In fact, $R$ is the localization of a graded finite-type $D$-algebra $A$. 
\end{theorem}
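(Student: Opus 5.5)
The plan is to build $A$ as an explicit graded subring of a very well-behaved graded $D$-algebra $B$. The point is to exploit the imperfection of $k=\mathbb{F}_2(a)$, since the positive results quoted in the introduction rule out any example with perfect residue field, normal special fiber, $F$-pure special fiber, or Cohen--Macaulay $R_s$.

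\textbf{Source of non-$F$-injectivity.} Set $k'=k(a^{1/2})$. Frobenius kills the class of $a^{1/2}$ in $k'/k$, because $(a^{1/2})^2=a\in k$. If $A\subseteq B$ with $B/A$ containing a copy of such a $k'/k$-type piece in a suitable degree, this will produce nonzero elements of $H^i_\m(A)$ killed by $F$.

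\textbf{Construction.} I take $B$ to be a polynomial ring (or a Cohen--Macaulay normal graded domain) over $D$ or over $D\otimes_k k'$, of relative dimension three, so that $\dim B=4$. I take $A=D[\text{generators}]\subseteq B$ as a Nagata-type ``gluing'' subring. The gluing condition should involve $s$, for instance requiring that a coefficient lie in $k+s(\cdots)$ or involving an element like $a^{1/2}+s\,x$. The intent is that at $s\neq 0$ the gluing forces a $k'/k$-type defect, while at $s=0$ the special fiber $A/sA$ acquires extra elements that kill that defect.

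\textbf{Key steps.}
\begin{enumerate}
\item Check that $A$ is a finitely generated graded $D$-algebra and a domain, hence $s$ is a nonzerodivisor. It is $F$-finite because $[k:k^2]=2$.
\item Compute $H^i_\m(A)$ from the short exact sequence $0\to A\to B\to C\to 0$, where $C=B/A$ is an explicit graded module, typically of dimension $\le 2$. Since $B$ is Cohen--Macaulay of dimension $4$, this gives $H^{i}_\m(A)\cong H^{i-1}_\m(C)$ for $i\le 3$. Because everything is graded, localizing at the homogeneous maximal ideal does not change these computations.
\item Exhibit a nonzero class in $H^i_\m(R)$, presumably with $i=2$ or $3$, coming from the $a^{1/2}$-piece and killed by Frobenius. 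This shows that $R$ is not $F$-injective.
\item Show that $R/sR$ is $F$-injective by running the analogous sequence $0\to A/sA\to B/sB\to C'\to 0$, or a variant with the seminormalization of $A/sA$ in place of $B/sB$. In each degree I then check directly that Frobenius on $H^{j}_\m(C')$ is injective.
\end{enumerate}
As a consistency check, the Fedder diagram chase requires that multiplication by $s$ on $H^{i-1}_\m(R)$ not be surjective. I will verify this explicitly; it also confirms that hypotheses (i)--(v) of the introduction all fail.

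\textbf{Main obstacle.} I expect step (4) to be the hardest. The special fiber cannot be $F$-pure, normal, $F$-anti-nilpotent or $F$-full, so no structural criterion will prove $F$-injectivity and it must be verified by hand on every local cohomology module. The imperfect residue field also makes Frobenius on these modules only $k^2$-semilinear, which complicates the bookkeeping.

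To handle this, I would first choose $B$ so that $C'$ is a direct sum of twisted copies of Stanley--Reisner-like or field-extension modules whose local cohomology is computable by Čech complexes. I would then check injectivity of $F$ degree by degree, using that Frobenius multiplies degrees by $2$. If this becomes unwieldy, I would instead compare with an explicit $F$-injective ring via a Frobenius-compatible map inducing injections on local cohomology.
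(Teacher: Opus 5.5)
\textbf{There is a genuine gap: the proposal never constructs a ring, and it contains neither of the two ideas that make the paper's example work.}

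\textbf{First gap: no construction and no mechanism.} No ring $A$ is ever written down; the gluing condition is only suggested (``$k+s(\cdots)$'' or ``$a^{1/2}+sx$''). So none of steps (1)--(4) can be checked. The one thing that must be invented is a reason why the $k(a^{1/2})/k$ defect survives in $H^\bullet_{\mathfrak m}(A)$ but disappears in $H^\bullet_{\mathfrak m}(A/sA)$, and this appears only as an ``intent''. By the Fedder chase you quote, $s$ must act non-surjectively on some $H^{i-1}_{\mathfrak m}(A)$ in a way entangled with Frobenius. Nothing in your plan produces this.

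The paper's mechanism is very specific:
\begin{enumerate}
\item Glue two disjoint copies of $Z=\operatorname{Spec}B$, with $B=D[\beta]/(\beta^2+s^2\beta+a)$, inside $\mathbb{P}^1_D$. This gives $C$ with $H^1(C,\mathcal{O}_C)\cong B$.
\item The involution $[X_0:X_1]\mapsto[X_0+s^2X_1:X_1]$ descends to $\sigma$ on $C$. It acts on $B$ by $\iota(\beta)=\beta+s^2$, which is trivial mod $s$.
\item Let $X$ be $\mathbb{P}^1_C$ with $0$ and $\infty$ glued via $\mathrm{id}$ and $\sigma$, and let $A$ be a section ring on $X$.
\item Then $H^2(X,\mathcal{O}_X)\cong\operatorname{coker}(\mathrm{id}+\iota)=(D/s^2D)[1]\oplus D[\beta]$, with $F[\beta]=a[1]+s^2[\beta]$.
\end{enumerate}
The $s$-torsion summand absorbs $a=F(a^{1/2})$. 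Hence $[\beta]/s\in H^1_{(s)}\bigl(H^2(X,\mathcal{O}_X)\bigr)\subseteq[H^4_{\mathfrak m}(A)]_0$ is killed by $F$. On the other hand, $H^2(X_{s=0},\mathcal{O})\cong H^2(X,\mathcal{O}_X)/s\cong K$ with $F[\beta]=a[1]\neq 0$.

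Your intuition that the special fiber ``acquires extra elements'' is right in spirit: it is the torsion class $[1]$. Two of your heuristics point the wrong way, however:
\begin{enumerate}
\item Nothing bad happens where $s\neq 0$: $B\otimes_D k(s)$ is separable over $k(s)$, and the defect is only visible through $H^1_{(s)}$ of a module with $s$-torsion.
\item The bad class lies in the \emph{top} local cohomology $H^4_{\mathfrak m}(A)$. Your set-up (a polynomial overring with quotient of dimension $\le 2$) makes $H^4_{\mathfrak m}(A)$ isomorphic to the top local cohomology of a regular ring, so it excludes this from the start. You would then have to engineer the failure in $H^2$ or $H^3$ by a mechanism you have not identified.
\end{enumerate}
(Incidentally, criterion (iv) only excludes the \emph{conjunction} of $\dim R\le 4$, a normal special fiber and a perfect residue field, not each condition separately.)

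\textbf{Second gap: no method for step (4).} This step needs a real idea. Over an imperfect field, injectivity of Frobenius does not survive the product-type operations (K\"unneth, Segre products) you would use to compute degree by degree. For example, $F$ is injective on $K=k(a^{1/2})$ but kills $a^{1/2}\otimes 1+1\otimes a^{1/2}\in K\otimes_k K$.

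The paper handles this as follows:
\begin{enumerate}
\item It arranges $X_{s=0}\cong C_{s=0}\times_k N$ with $N$ a nodal rational curve. This is exactly where $\sigma\equiv\mathrm{id}$ mod $s$ is used.
\item It proves a global form of Ma's lemma: on a globally $F$-split $Z$ with $H^0(Z,\mathcal{O}_Z)=k$, Frobenius carries $k$-linearly independent classes in $H^i(Z,\mathcal{A})$ to $k$-linearly independent classes.
\item It applies this lemma to the split factor $N$. It combines this with injectivity of $F$ on $H^1(C_{s=0},\mathcal{M}^{-m})$, which comes from the gluing sequence, since $C_{s=0}$ itself is not $F$-split.
\item Together these give injectivity on $H^2(X_{s=0},\mathcal{L}^n_{s=0})\cong H^1(C_{s=0},\mathcal{M}^n_{s=0})\otimes_k H^1(N,\mathcal{P}^n)$ for $n<0$.
\item Degree $0$ is a direct K\"unneth check. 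Positive degrees vanish by $H^i(X,\mathcal{L}^n)=0$ and base change.
\end{enumerate}
The paper also notes (Section~7) that even with an explicit presentation of $A$ as a subring of the kind you envisage, it has no self-contained equational proof; for instance, nonvanishing of the Frobenius-killed \v{C}ech class is not verified algebraically. So a ``by hand'' \v{C}ech verification is not a realistic substitute for this geometric input.
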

By taking a Veronese subalgebra, we may even assume that $A$ is standard graded over $D$, see \autoref{rem.SufficientlyHighMultipleIsStandardGraded}.

Due to the (partially conjectural) relationship between $F$-injective and Du Bois singularities via reduction to prime characteristic \cite{SchwedeFInjectiveAreDuBois, BhattSchwedeTakagiweakordinaryconjectureandFsingularity}, \autoref{thm:main-thm} gives one of the most prominent nodes of divergence between deformation of $F$-singularities and that of the singularities of complex birational geometry.  On the other hand, it is shown in \cite[Theorem 5.3]{MaSchwedeShimomoto} that if $(R, \fram)$ is a local ring essentially of finite type over $\C$, $f \in \fram$ is a nonzerodivisor, and $R/fR$ has dense $F$-injective type, then the same holds for $R$.  In fact, the proof of that result shows that the multiplication maps of \eqref{num.multByFSurjects} are surjective after reduction modulo $p \gg 0$ for a dense \emph{open} set of characteristic $p > 0$ models.  In this sense, the failure of deformation for $F$-injectivity is quite special.

In view of the affirmative results \ref{intro-1}-\ref{intro-3} as well as the implications
$$F\text{-pure}\Rightarrow F\text{-anti-nilpotent} \Rightarrow F\text{-full},$$
our quotient ring $A/sA$ must be manufactured to be $F$-injective but not $F$-full. The strategy we pursue here for producing such a ring mimics key parts of the approach of \cite{DSPS25}, which produces non-$F$-fullness by a purely inseparable base change mechanism on a Segre factor. Similar examples using various gluing procedures also appear in \cite{CuminoGrecoManaresiHyperplaneSections,EnescuHochsterTheFrobeniusStructureOfLocalCohomology,Enescu2009}.

One crucial departure from \cite{DSPS25} in the present paper is that in the ambient ring $A$ we work over the DVR $D:=k[s]_{(s)}$ rather than $k$, and treat $s$ as the deformation parameter. We produce $A$ by a gluing procedure via the domain $B:=D[\beta]/(\beta^2+s^2\beta +a)$, and the identification $B/sB\cong k[\sqrt{a}]$ is reminiscent of the well-studied loss of $F$-injectivity along a purely inseparable base change.

As an immediate corollary, we are able to resolve in the negative a question of De Stefani and Ma concerning the existence of Frobenius-stable secondary representations (see \cite[Question 4.1]{DM22} and \autoref{sec:corollaries}) using the same ring as in \autoref{thm:main-thm}. 

\begin{corollary}
    There exists a local $F$-finite domain $(R,\m)$ which has a local cohomology module $H^i_\m(R)$, none of whose secondary representations are $F$-stable.
\end{corollary}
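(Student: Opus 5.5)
The plan is to deduce the corollary from \autoref{thm:main-thm} and the affirmative result \ref{intro-3} of De Stefani--Murayama \cite{DM22}, by contraposition. Take $(R,\m)$ to be the four-dimensional $F$-finite local domain of \autoref{thm:main-thm}, with nonzerodivisor $f=s\in\m$. Then $R/sR$ is $F$-injective while $R$ is not. We argue that some $H^i_\m(R)$ admits no $F$-stable secondary representation.

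\textbf{Step 1.} Recall the relevant theorem of \cite{DM22}, in the form cited as \ref{intro-3}. Let $R$ be excellent and $f\in\m$ a nonzerodivisor with $R/fR$ $F$-injective. If $H^i_\m(R)$ has a Frobenius-stable secondary representation for every $0\le i\le \dim R$, then $R$ is $F$-injective.

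We must check that our $R$ meets the hypotheses. It is $F$-finite, being essentially of finite type over the $F$-finite field $\mathbb{F}_2(a)$, and $F$-finite rings are excellent by Kunz. The element $s$ is a nonzerodivisor, since $R$ is a domain and $s\neq 0$.

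\textbf{Step 2.} Suppose every $H^i_\m(R)$ admitted an $F$-stable secondary representation. Step 1 would then make $R$ $F$-injective, contradicting \autoref{thm:main-thm}. Hence there is some $i$ for which $H^i_\m(R)$ has no $F$-stable secondary representation.

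Such modules are Artinian, so they do have secondary representations. The failure is therefore that \emph{none} of them is $F$-stable, which is exactly the claim of the corollary.

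\textbf{Main obstacle.} The only real obstacle is making sure the precise hypotheses of \cite{DM22}'s theorem match our setting: excellence, whether it needs the representation for all $i$ or only for specific $i$, and whether it asks for $F$-stability of $H^i_\m(R)$ or of its $f$-torsion pieces. If it needs only certain indices, the argument pins down which $i$ fails, presumably the one where multiplication by $s$ is not surjective. Optionally, one could identify that $i$ from the structure of $A$ by locating the failure of surjectivity of \eqref{num.multByFSurjects}, but this is not needed for the existence statement.
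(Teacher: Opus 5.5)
Your proposal is correct and is exactly the paper's argument: the corollary is the contrapositive of \cite[Theorem 3.4]{DM22}, applied to the ring of \autoref{thm:main-thm} with $f = s$, and your checks of excellence (via $F$-finiteness) and of $s$ being a nonzerodivisor are the only hypotheses to verify. One small slip: \cite{DM22} is by De Stefani and Ma, not De Stefani and Murayama.
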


\subsection*{Acknowledgements}

The authors thank Ben Baily, Linquan Ma, Anurag K. Singh, and Kevin Tucker for valuable conversations.  Karl Schwede was supported by NSF Grant \#2501903 as well as Simons Travel Support for Mathematicians SFI-MPSTSM-00013051.

\subsection*{AI Disclosure}

This example was found by OpenAI's ChatGPT 5.6 Sol Pro using a ChatGPT account provided to Karl Schwede by the University of Utah.  It was found first in a discussion where ChatGPT 5.6 Sol Pro had already identified some counter-examples to related problems (which will be written up in a separate work).  All writing in this article was done by the human authors.  Most, but not all, steps in the proof of the main result below follow the strategies ChatGPT 5.6 Sol Pro initially proposed.  The proof has also been substantially restructured to reflect the authors' understanding and tastes. ChatGPT 5.6 Sol Pro has also been used on this manuscript for the purpose of proofreading.  The human authors are responsible for this writeup. 

\section{Preliminaries} 

Throughout this article when working with schemes of characteristic $p > 0$, by Frobenius we always mean the \emph{absolute} Frobenius, even though our base fields are usually imperfect.

\subsection{A useful lemma}
We state the following global version of \cite[Proposition 3.4]{MaFinitenesspropertyoflocalcohomologyforFpurerings}.  

\begin{lemma}[{\cf \cite[Proposition 3.4]{MaFinitenesspropertyoflocalcohomologyforFpurerings}}]
    \label{lem.GlobalVersionOfMa}
    Suppose $k$ is a field of characteristic $p > 0$ and $Z$ is a globally $F$-split $k$-scheme with $H^0(Z, \cO_Z) = k$.  If $\sA$ is a line bundle on $Z$ and $a_1, \dots, a_r \in H^i(Z, \sA)$ are $k$-linearly independent, then their images under Frobenius,
    $a_1^p, \dots, a_r^p \in H^i(Z, \sA^p)$, are also $k$-linearly independent.  
\end{lemma}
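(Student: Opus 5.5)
We argue by induction on $r$, following the strategy of \cite[Proposition 3.4]{MaFinitenesspropertyoflocalcohomologyforFpurerings} but replacing the local splitting with a global one. The key point is that global $F$-splitting gives an additive map $\phi : F_*\cO_Z \to \cO_Z$ with $\phi(1)=1$. Twisting by $\sA$ and using the projection formula $F_*(F^*\sA) = F_*(\sA^p) \cong \sA\otimes F_*\cO_Z$, we obtain an additive map
\[
\Phi : H^i(Z,\sA^p) \to H^i(Z,\sA).
\]
This map is $p^{-1}$-linear: $\Phi(\lambda^p x) = \lambda\Phi(x)$ for $\lambda\in k$, because $\phi$ is $\cO_Z$-linear out of $F_*\cO_Z$ and $k = H^0(Z,\cO_Z)$ acts through global sections. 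It also satisfies $\Phi(a^p) = a$ for all $a\in H^i(Z,\sA)$, since $\phi\circ F = \mathrm{id}$ on $\cO_Z$. More generally,
\[
\Phi(\lambda a^p) = \phi(\lambda)\, a,
\]
where $\phi(\lambda)\in H^0(Z,\cO_Z)=k$. This uses the module structure: $\lambda a^p$ corresponds to $F_*\lambda \otimes a$ under the projection formula. This identity is the step I expect to need the most care, namely checking that the Frobenius on cohomology $a\mapsto a^p$ corresponds to $a\mapsto 1\otimes a$ under $H^i(Z,\sA\otimes F_*\cO_Z)$, and that $k$-scalars pass through correctly. I would verify this on a Čech cover.

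Now suppose we have a nontrivial relation $\sum_j \lambda_j a_j^p = 0$ with $\lambda_j\in k$. The Frobenius $a\mapsto a^p$ is injective here, because $\Phi(a^p)=a$; this handles the case $r=1$. For general $r$, we may assume after reordering that $\lambda_1\neq 0$, and dividing through by $\lambda_1$ that $\lambda_1=1$. Applying $\Phi$ gives
\[
a_1 + \sum_{j\ge 2}\phi(\lambda_j)\,a_j = 0.
\]
This is a nontrivial relation among the $a_j$, since the coefficient of $a_1$ is $1$. That contradicts the $k$-linear independence of the $a_j$.

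So no induction is actually needed: the single application of $\Phi$ after normalizing $\lambda_1=1$ suffices. The only inputs are that $\phi(1)=1$ and $\phi(F_*k)\subseteq H^0(Z,\cO_Z)=k$, and the latter is exactly where the hypothesis $H^0(Z,\cO_Z)=k$ is used.
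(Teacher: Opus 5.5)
Your proof is correct and is essentially the paper's argument. Both apply to a putative relation the map $\Phi$ obtained by twisting a Frobenius splitting by $\sA$, and both use $\phi(F_*k)\subseteq H^0(Z,\cO_Z)=k$ to produce a relation among the $a_j$ with a nonzero coefficient; the only cosmetic difference is that you normalize the relation so that $\lambda_1=1$, whereas the paper keeps the relation and pre-multiplies the splitting by $F_*\lambda_s^{-1}$ so that $\phi(F_*\lambda_s)=1$, which is the same computation.
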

\begin{proof}
    Suppose we have a $k$-linear relation on $a_1^p, \dots, a_r^p$, with $\sum_{i = 1}^r a_i^p \lambda_i  = 0$ for some $\lambda_i \in k$ at least one of which is nonzero, say $\lambda_s \neq 0$.  We can rewrite this relation as $\sum_{i=1}^r a_i F_* \lambda_i = 0 \in H^i(Z, F_* \sA^p)$.
    We fix $\phi : F_* \cO_Z \to \cO_Z$ to be the map obtained by pre-multiplying a splitting of Frobenius by $F_* \lambda_s^{-1}$ so that $\phi(F_* \lambda_s) = 1$ on all charts.  This gives us a map $\phi_k : F_* k = H^0(Z, F_* \cO_Z) \to H^0(Z, \cO_Z) = k$.  Tensor with $\sA$, and take $i$th cohomology to obtain
    \[
        \Phi : H^i(Z, F_* \sA^p) \cong H^i(Z, \sA \otimes F_* \cO_Z) \to H^i(Z, \sA \otimes \cO_Z) \cong H^i(Z, \sA).
    \]
    Applying $\Phi$ to our relation we obtain a new relation 
    \[
        0 = \sum_{i = 1}^r a_i \Phi(F_* \lambda_i) = \sum_{i=1}^r a_i \phi_k(F_* \lambda_i) \in H^i(Z, \sA).
    \]
    As $\phi_k(F_* \lambda_s) = 1$, this is a contradiction.
\end{proof}

\subsection{Ferrand's gluing construction}
\label{subsec.FerrandGluing}

In this paper we recall Ferrand's gluing construction and in particular the construction of line bundles on the associated pushouts that we will need.  First, we recall the following condition on a scheme:
\begin{center}
    (AF)  every finite collection of points is contained in an open affine subscheme.
\end{center}
This condition will follow immediately for us as the schemes we consider will be projective over a DVR (\cite[\href{https://stacks.math.columbia.edu/tag/01ZY}{Tag 01ZY}]{stacks-project}).

\begin{theorem}[{\cite[Theorem~5.4]{FerrandConducteurEtPincement}}]
\label{thm.FerrandGluing}
Suppose we have a pushout diagram in the category of ringed spaces:
\[
    \xymatrix{
        W \ar@{->>}[d]_{g} \ar@{^{(}->}[r]^e & Y \ar@{.>}[d]^{\nu}\\
        Z \ar@{.>}[r]_j & X
    }
\]
with $r = \nu \circ e = j \circ g$.  
Here we assume $W, Y, Z$ are schemes, $g$ is a finite surjective\footnote{surjective is not necessary, but it will be true in our case} map of schemes, and $e$ is a closed embedding.  Suppose additionally that $Y$ and $Z$ satisfy (AF).  Then $X$ is a scheme also satisfying (AF) and the diagram above is a diagram in the category of schemes.  Furthermore, $\nu$ is finite and $j$ is a closed embedding, and $\nu$ induces an isomorphism of $Y \setminus e(W)$ with $X \setminus j(Z)$.  Finally, the diagram is both a pushout (by construction) and a pullback.  In other words, it is both cocartesian (by construction) and cartesian.
\end{theorem}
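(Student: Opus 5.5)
This is Ferrand's theorem, so my plan is to reproduce his argument. First I build $X$ in the category of ringed spaces, then show it is a scheme by gluing affine pieces. The (AF) hypothesis is exactly what lets me cover everything by affine opens that are compatible with both $e$ and $g$.

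\emph{Step 1: the topological and affine constructions.} Set $|X| := |Y| \sqcup_{|W|} |Z|$ as a topological pushout, with $\mathcal{O}_X := \nu_*\mathcal{O}_Y \times_{r_*\mathcal{O}_W} j_*\mathcal{O}_Z$. This is the pushout in ringed spaces. In the affine case $W=\operatorname{Spec} C$, $Y=\operatorname{Spec} B$, $Z=\operatorname{Spec} D$, with $B\to C$ surjective and $D\to C$ finite, put $A := B\times_C D$. I then check the following:
\begin{itemize}
\item $A\to D$ is surjective, since its kernel is $\ker(B\to C)$.
\item $A\to B$ is finite. Lift generators of $C$ over $D$ to $B$; they generate $B$ over $A$, because $\ker(B\to C)\subseteq A$.
\item $B\otimes_A D\to C$ is an isomorphism, and $\operatorname{Spec} B\setminus V(I)\cong \operatorname{Spec} A\setminus V(I)$ for $I=\ker(B\to C)$. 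The second holds because $I$ is also an ideal of $A$ and $A_x=B_x$ for $x\in I$.
\end{itemize}
Finally, $\operatorname{Spec} A$ is the ringed-space pushout. For this I compare stalks and topologies. Points are identified correctly because $A\to B$ is integral (so going-up gives surjectivity onto what lies outside $V(I)$) and $\operatorname{Spec} D=V(I)\subseteq\operatorname{Spec} A$. The topology is the quotient topology because $\operatorname{Spec} B\sqcup\operatorname{Spec} D\to\operatorname{Spec} A$ is finite surjective, hence closed. I also check that the fibre-product description of $\mathcal{O}_A$ localizes: localization is flat, so it commutes with the fibre product.

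\emph{Step 2: globalizing via (AF).} Let $x\in X$. Its preimage in $Y\sqcup Z$ is finite, since $g$ is finite. I want an affine open $V\subseteq Z$ and an affine open $U\subseteq Y$ with $e^{-1}(U)=g^{-1}(V)$, both containing the relevant points. Using (AF) on $Z$, first choose an affine $V\ni$ (the points over $x$). Then $g^{-1}(V)$ is affine, being finite over an affine. Using (AF) on $Y$, choose an affine $U'$ containing the finitely many points of $e(g^{-1}(x))$.

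The key trick is to shrink these simultaneously to saturated opens. Pick $h\in\Gamma(U',\mathcal{O})$ with $D(h)\cap e(W)$ contained in $e(g^{-1}V)$ and containing the points. Then take the norm of $h|_W$ down to $V$ under the finite map $g$ to get a function on $V$, and replace $V$ by its principal open. Repeat the process once more. This produces a pair whose preimages in $W$ agree; this is the standard argument from Ferrand / Stacks Tag 0E25.

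On such a pair, Step 1 shows that the open subset of $X$ it defines is $\operatorname{Spec}(\Gamma(U)\times_{\Gamma(e^{-1}U)}\Gamma(V))$. These opens cover $X$, so $X$ is a scheme, and the maps are morphisms of schemes. The remaining assertions are local on $X$, so they follow from the affine case:
\begin{itemize}
\item $\nu$ is finite and $j$ is a closed immersion;
\item $\nu$ restricts to an isomorphism off $j(Z)$;
\item the square is cartesian, by $B\otimes_A D\cong C$.
\end{itemize}
(AF) for $X$ follows by applying the same saturation argument to finitely many points at once.

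I expect the main obstacle to be the simultaneous choice of saturated affine neighbourhoods, i.e.\ the norm argument. The affine algebra in Step 1 is routine, but it requires care to see that the ringed-space pushout is computed locally.
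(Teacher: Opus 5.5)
The paper does not prove this statement; it only cites Ferrand. Your outline is the standard argument (Ferrand's, and also the Stacks project's). You settle the affine case with $A=B\times_C D$, then cover $X$ by opens coming from pairs of affine opens $U\subseteq Y$, $V\subseteq Z$ with $e^{-1}(U)=g^{-1}(V)$. Step 1 is fine. The gap is in the step you yourself flag as the crux, and it has two problems.

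First, a norm along $g$ exists only when $g$ is finite locally free. It happens to be locally free of rank $2$ in the paper's application, but the theorem assumes only that $g$ is finite. Second, even where a norm exists, $h|_W$ is a function on $e^{-1}(U')$. Since you chose $V$ and $U'$ independently, $e^{-1}(U')$ need not contain $g^{-1}(V)$, so there is no function on $g^{-1}(V)$ to take the norm of. In addition, ``repeat the process once more'' does not explain where the exact equality $e^{-1}(U)=g^{-1}(V)$ comes from.

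The repair needs only that finite morphisms are closed, plus lifting along the closed immersion $e$. With $h$ chosen as you do, $e^{-1}(D(h))$ is an open subset of $g^{-1}(V)$ containing $g^{-1}(z)$. So $g\bigl(g^{-1}(V)\setminus e^{-1}(D(h))\bigr)$ is closed in $V$ and misses $z$. By prime avoidance, choose $a\in\mathcal{O}(V)$ vanishing on this closed set with $a(z)\neq 0$; then $g^{-1}(D(a))\subseteq e^{-1}(D(h))$. Now restrict $g^*a$ to the closed subscheme $e^{-1}(D(h))$ of the affine scheme $D(h)$, and lift it to $b\in\mathcal{O}(D(h))$. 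Then $U:=D(b)\subseteq D(h)$ is affine and $e^{-1}(U)=e^{-1}(D(h))\cap D(g^*a)=g^{-1}(D(a))$ exactly. This pull-back-and-lift step, not a second norm, is what produces equality.

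For (AF) on $X$ you run this with finitely many points. If some of them lie in $Y\setminus e(W)$, you must also choose the lift $b$ so that it does not vanish at those points; $b$ is only determined modulo the ideal of $e(W)\cap D(h)$. This is possible by prime avoidance for cosets, since that ideal is not contained in the primes of points off $e(W)$. With this replacement, the rest of your argument goes through: the local identification of $X$ with $\operatorname{Spec}$ of the fibre product, finiteness of $\nu$, $j$ a closed immersion, and cartesianness via $B\otimes_A D\cong C$.
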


We only need this in the following special case. For us
\[
W = W_1 \coprod W_2
\]
is a disjoint union of closed subschemes such that the induced maps $W_i \to Z$ are isomorphisms.  In this case, \emph{we are gluing $W_1$ to $W_2$ along the morphisms to $Z$.}

We label the following maps for future reference:
\[
g_i : W_i \hookrightarrow W \xrightarrow{g} Z \;\;\;\; \text{ and } \;\;\;\; e_i : W_i \hookrightarrow W \xrightarrow{e} Y
\]

We have corresponding surjective maps of sheaves:
\[
    \rho_i : \nu_* \cO_Y \to \nu_* (e_i)_* \cO_{W_i} = j_* (g_i)_* \cO_{W_i} \xrightarrow{\sim} j_* \cO_Z.
\]
With that notation, we have the following short exact sequence of sheaves on $X$:
\begin{equation}
    \label{eq.GluingSequenceGeneral}
    0 \to \cO_X \to \nu_* \cO_Y \xrightarrow{d_g} j_* \cO_Z \to 0
\end{equation}
where $d_g = \rho_1 - \rho_2$ is defined to be the difference of the $\rho_i$.  In particular, $d_g$ is not a map of sheaves of rings (it sends $1 \mapsto 0$).  We note $d_g$ is surjective by the Chinese Remainder Theorem as $W_1$ and $W_2$ are disjoint closed subschemes.  Since we will work exclusively in characteristic two, the difference of those maps is also the sum.  Finally, we note that Frobenius is compatible with \autoref{eq.GluingSequenceGeneral} in that we have a commutative diagram:
\begin{equation}
    \label{eq.GluingSequenceFrobeniusCompatible}
    \xymatrix{
        0 \ar[r] & \cO_X \ar[d] \ar[r] & \nu_* \cO_Y \ar[r]^{d_g} \ar[d] & j_* \cO_Z \ar[r] \ar[d] & 0 \\
        0 \ar[r] & F_* \cO_X \ar[r] & F_* \nu_* \cO_Y \ar[r]_{F_* d_g} & j_* F_* \cO_Z \ar[r] & 0 \\
    }
\end{equation}

In this setup, suppose we have line bundles $\sL_Y$ and $\sL_Z$ on $Y$ and $Z$ respectively and we have a fixed isomorphism 
\[
    \phi : e^* \sL_Y \xrightarrow{\sim} g^* \sL_Z.
\]
Then we define a sheaf on $X$ by the following formula.
\begin{equation}
    \label{eq.DefiningALineBundle}
    \sL_X = \ker \big( \nu_* \sL_Y \oplus j_* \sL_Z \xrightarrow{\Psi_{\phi}}  r_* g^* \sL_Z \big)
\end{equation}
where $\Psi_{\phi}$ is defined on a local section by $\Psi_{\phi}(v, m) = \phi(e^* v) - g^* m$. 
Thanks to \cite[Theorem~2.2(iv)]{FerrandConducteurEtPincement} we have that $\sL_X$ is finitely generated and locally free.  Further, by \cite[Theorem~2.2(i)]{FerrandConducteurEtPincement}, the following maps induced by \autoref{eq.DefiningALineBundle} are isomorphisms
\begin{equation}
    \label{eq.GluedLineBundlePullsBack}
\theta_Y : \nu^* \sL_X \xrightarrow{\sim} \sL_Y\;\;\;\; \text{ and } \;\;\;\; \theta_Z : j^* \sL_X \xrightarrow{\sim} \sL_Z.
\end{equation}
Indeed, these maps are defined via adjunction from the maps $\sL_X \to \nu_* \sL_Y$ and $\sL_X \to j_* \sL_Z$.  
In particular, since $\sL_Y$ is a line bundle and $\nu$ is surjective,  $\sL_X$ is also a line bundle.  
Even more, \cite[Theorem~2.2(i)]{FerrandConducteurEtPincement} guarantees that $\theta_Y$ and $\theta_Z$ are compatible with $\phi$ in the following sense:
\begin{equation}
    \label{eq.CompatibilitiesOfThetaMaps}
    \phi \circ (e^*  \theta_Y) = (g^* \theta_Z) \circ b
\end{equation}
where $b : e^* \nu^* \sL_X \xrightarrow{\sim} g^* j^* \sL_X$ is the isomorphism coming from the commutativity of the diagram.

We fix some additional notation.  Set $\rho_i^{\phi} : \nu_* \sL_Y \to j_* \sL_Z$ to be the map 
\[
    \rho_i^{\phi} : \nu_* \sL_Y \to \nu_* (e_i)_* e_i^* \sL_Y \xrightarrow{\nu_* (e_i)_* \phi|_{W_i} } j_* (g_i)_* g_i^* \sL_Z \xrightarrow{\sim} j_* \sL_Z.
\]

Analogous to \autoref{eq.GluingSequenceGeneral}, we have the following.

\begin{lemma}
    \label{lem.LineBundleSequence}
    There is a short exact sequence 
    \[
        0 \to \sL_X \to \nu_* \sL_Y \xrightarrow{\Delta_{\phi}} j_* \sL_Z \to 0
    \]
    obtained by tensoring the sequence \autoref{eq.GluingSequenceGeneral} by $\sL_X$.
    Here $\Delta_{\phi}$ is defined on a local section $y$ of $\sL_Y$ by
    \[
        y \mapsto \rho_1^{\phi}(y) - \rho_2^{\phi}(y).
    \]
\end{lemma}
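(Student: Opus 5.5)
Tensor \autoref{eq.GluingSequenceGeneral} with the line bundle $\sL_X$. Since $\sL_X$ is locally free, tensoring is exact, so we get a short exact sequence
\[
0 \to \sL_X \to \nu_*\cO_Y \otimes \sL_X \xrightarrow{d_g \otimes 1} j_*\cO_Z \otimes \sL_X \to 0 .
\]
The work is to identify the middle and right terms with $\nu_*\sL_Y$ and $j_*\sL_Z$, and then to check that the maps become the natural inclusion and $\Delta_\phi$.

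\textbf{Identifying the terms.} By the projection formula ($\nu$ is finite, hence affine, and $j$ is a closed embedding), we have $\nu_*\cO_Y\otimes\sL_X \cong \nu_*\nu^*\sL_X$ and $j_*\cO_Z\otimes\sL_X\cong j_*j^*\sL_X$. Composing with $\nu_*\theta_Y$ and $j_*\theta_Z$ from \autoref{eq.GluedLineBundlePullsBack} gives isomorphisms with $\nu_*\sL_Y$ and $j_*\sL_Z$. The first map $\sL_X\to\nu_*\nu^*\sL_X\to\nu_*\sL_Y$ is then the unit followed by $\theta_Y$. By the definition of $\theta_Y$ via adjunction, this is exactly the map $\sL_X\to\nu_*\sL_Y$ coming from \autoref{eq.DefiningALineBundle}.

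\textbf{Identifying the surjection.} It suffices to treat each $\rho_i\otimes 1$ separately, since $d_g\otimes 1=\rho_1\otimes1-\rho_2\otimes1$. Under the projection formula, $\rho_i\otimes 1$ is the composite
\[
\nu_*\nu^*\sL_X\to \nu_*(e_i)_*e_i^*\nu^*\sL_X = j_*(g_i)_*g_i^*j^*\sL_X \cong j_*j^*\sL_X .
\]
Here the middle equality uses the canonical isomorphism $b$ restricted to $W_i$, and the last step uses $g_i$ being an isomorphism. Transporting along $\theta_Y$ and $\theta_Z$ turns this into
\[
\nu_*\sL_Y\to\nu_*(e_i)_*e_i^*\sL_Y\to j_*(g_i)_*g_i^*\sL_Z\to j_*\sL_Z,
\]
where the middle arrow is $(g_i^*\theta_Z)\circ b\circ(e_i^*\theta_Y)^{-1}$. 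The compatibility \autoref{eq.CompatibilitiesOfThetaMaps}, restricted to $W_i$, says this middle arrow is $\phi|_{W_i}$. So the composite is $\rho_i^\phi$, and the surjection becomes $\rho_1^\phi-\rho_2^\phi=\Delta_\phi$.

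\textbf{Main obstacle.} The only delicate point is bookkeeping: we must check that the projection-formula isomorphisms commute with the restriction maps to $W_i$ and with $b$. Locally on an affine open (available by (AF)) where $\sL_X$ is trivialized, everything reduces to \autoref{eq.GluingSequenceGeneral} twisted by a unit, so I would verify the compatibilities there.
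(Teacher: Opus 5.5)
Your proposal is correct and follows the paper's proof. The paper likewise tensors with $\sL_X$, applies the projection formula, transports along $\nu_*\theta_Y$ and $j_*\theta_Z$, and identifies the resulting map with $\Delta_\phi$ using $\phi = (g^*\theta_Z)\circ b\circ(e^*\theta_Y)^{-1}$ (i.e.\ \autoref{eq.CompatibilitiesOfThetaMaps}), checking the remaining compatibilities in local charts; your route through each $W_i$ via naturality is just a more functorial packaging of that local computation.
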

\begin{proof}
    Note that $\phi = (g^* \theta_Z) \circ b \circ (e^* \theta_Y)^{-1}$ and in particular, $\theta_Y$ and $\theta_Z$ determine $\phi$.  
    Tensoring  \autoref{eq.GluingSequenceGeneral} by $\sL_X$ and using the projection formula gives us the following isomorphism of short exact sequences:
    \[
        \xymatrix@C=40pt{
            0 \ar[r] & \sL_X \ar@{=}[d] \ar[r] & \nu_* \nu^* \sL_X \ar[d]_{\nu_* \theta_Y}^{\sim} \ar[r]^{\delta} & j_* j^* \sL_X \ar[d]_{j_* \theta_Z}^{\sim} \ar[r] & 0\\
            0 \ar[r] & \sL_X \ar[r] &  \nu_* \sL_Y \ar[r]_-{\kappa} &  j_* \sL_Z \ar[r] & 0
        }
    \]
    where $\delta$ is induced by $\sL_X \otimes d_g$ and the projection formula and where $\kappa = (j_* \theta_Z) \circ \delta \circ (\nu_* \theta_Y)^{-1}$ makes the diagram commute.  The remainder of the proof is a careful computation in local charts which verifies that $\kappa = \Delta_{\phi}$.
    
    As this is a local computation, we may write $X = \Spec R$, $Y = \Spec S$, $Z = \Spec T$ and $W_i = \Spec S/{J_i}$.  We have canonical projection morphisms $\pi_i : S \to S/J_i$, $\pi : S \to S/J_1 \times S/J_2 = B$ and isomorphisms $\gamma_i : T \to S/{J_i}$ with map $\gamma : T \to S/J_1 \times S/J_2 = B$ defined by $\gamma(t) = (\gamma_1(t), \gamma_2(t))$.  Our sequence \autoref{eq.GluingSequenceGeneral} becomes
    \[
        0 \to R \to S \xrightarrow{\gamma_1^{-1} \circ \pi_1 - \gamma_2^{-1} \circ \pi_2} T \to 0
    \]
    where $d_g$ becomes $\gamma_1^{-1} \circ \pi_1 - \gamma_2^{-1} \circ \pi_2$,
    and we have ring maps $\zeta : R \to T$ and $n : R \to S$ such that $\gamma \circ \zeta = \pi \circ n$.
    Shrinking our open set if necessary, we let $L_S$ and $L_T$ be rank-1 free $S$- and $T$-modules respectively and a choice of isomorphism $\phi : L_S \otimes_S (S/J_1 \times S/J_2) \to L_T \otimes_T (S/J_1 \times S/J_2)$ which breaks down into isomorphisms $\phi_i : L_S \otimes_S S/J_i \to L_T \otimes_T S/J_i$.  Furthermore, we may also assume we have a rank-1 free module $L_R$ and isomorphisms  $\theta_S : L_R \otimes_R S \to L_S$ and $\theta_T : L_R \otimes_R T \to L_T$ such that 
    \[
        \phi = (\theta_T \otimes_T B) \circ b \circ (\theta_S^{-1} \otimes_S B) 
    \]
    where $b : L_R \otimes_R S \otimes_S B \to L_R \otimes_R T \otimes_T B$ is the canonical isomorphism.  Breaking this down further, on $S/J_i$ we have 
    \[
        \phi_i = (\theta_T \otimes_T S/J_i) \circ b_i \circ (\theta_S^{-1} \otimes_S S/J_i)
    \]
    where $b_i : L_R \otimes_R S \otimes_S S/J_i \to L_R \otimes_R T \otimes_T S/J_i$ is the canonical isomorphism.

    Via $\gamma_i$, we have isomorphisms $G_i : L_T \to L_T \otimes_{T} S/J_i$.  Pick $l \in L_R$ a free generator.  The local version of the map $\Delta_{\phi}$ is defined by it sending $s \theta_S(l \otimes 1_S) \in L_S$ for $s \in S$, to 
    \[
        G_1^{-1}(\phi_1(s \theta_S(l \otimes 1_S) \otimes 1_{S/J_1})) - G_2^{-1}(\phi_2(s \theta_S(l \otimes 1_S) \otimes 1_{S/J_2})).
    \]
    Unraveling our descriptions of $\phi_i$ in terms of the $\theta$ we compute 
    \[
        \begin{array}{rcl}
        G_i^{-1} (\phi_i(s \theta_S(l \otimes 1_S) \otimes 1_{S/J_i})) & = & G_i^{-1}( (\theta_T \otimes_T S/J_i) \Big( b_i \big( (\theta_S^{-1} \otimes_S S/J_i)(s\theta_S(l \otimes 1_S) \otimes 1_{S/J_i}) \big) \Big))\\
            & = & G_i^{-1} ((\theta_T \otimes_T S/J_i) \Big( b_i \big( l \otimes s \otimes 1_{S/J_i} \big) \Big))\\
            & = & G_i^{-1} ((\theta_T \otimes_T S/J_i)(l \otimes 1_T \otimes \pi_i(s)))\\
            & = & G_i^{-1} ((\theta_T(l \otimes 1_T)) \otimes \pi_i(s)) \\
            & = & \gamma_i^{-1}(\pi_i(s)) \theta_T(l \otimes 1_T)
        \end{array}
    \]
    In particular, $\Delta_{\phi}$ sends $s \theta_S(l \otimes 1_S)$ to $(\gamma_1^{-1}(\pi_1(s)) - \gamma_2^{-1}(\pi_2(s))) \theta_T(l \otimes 1_T)$.

    On the other hand, $\kappa$ on $s \theta_S(l \otimes 1_S)$ is computed as follows.  First we apply $\theta_S^{-1}$ to obtain $l \otimes s$.  Next we apply $\delta = {\id}_{L_R} \otimes_R d_g $ which sends $l \otimes s$ to $l \otimes (\gamma_1^{-1}(\pi_1(s)) - \gamma_2^{-1}(\pi_2(s)))$.  Finally, applying $\theta_T$ yields $\theta_T(l \otimes (\gamma_1^{-1}(\pi_1(s)) - \gamma_2^{-1}(\pi_2(s)))) = (\gamma_1^{-1}(\pi_1(s)) - \gamma_2^{-1}(\pi_2(s)))\theta_T(l \otimes 1)$.  Thus the two maps agree.
\end{proof}

\section{The construction}\label{sec:construction}

We sketch the construction.  We begin with the base schemes, then move to the non-normal 2-dimensional scheme $C$, the non-normal 3-dimensional scheme $X$, and finally the ring $A$ claimed in \autoref{thm:main-thm} which is a certain section ring over $X$.  

\subsection{The base schemes, extensions and variants}

We have the following rings.

\begin{enumerate}
    \item Set $k = \bF_2(a)$, an $F$-finite field.
    \item Set $D = k[s]_{(s)}$, a DVR.  Note $D/(s) = k$.    
    \item Set $B := D[\beta]/(\beta^2 + s^2 \beta + a)$.
    \item Set $K := B/sB = k[\beta]/(\beta^2 + a) \cong k[a^{1/2}]$.
\end{enumerate}

We see that $B \supseteq D$ is generically \'etale but the residue field extension is inseparable (it can be viewed as the Frobenius).  One quickly checks that 
\begin{equation}
    \label{eq.DefiningAutomorphismOnB}
    \iota : B \to B \;\;\;\;\; {\iota(\beta) = \beta + s^2}
\end{equation}
defines an automorphism of $B$ and also an automorphism of $D[\beta]$.

\subsection{The scheme $C$ and automorphism $\sigma$}

Write $Y = \bP^1_D = \Proj D[X_0, X_1]$ and set $Z = \Spec B$.  Consider the two homogeneous equations in $D[X_0,X_1]$.
\[
\begin{array}{rcl}
    F_P & = & X_0^2 + s^2 X_0 X_1 + a X_1^2\\
    F_Q & = & (X_0+X_1)^2 + s^2 (X_0+X_1) X_1 + a X_1^2
\end{array}
\]
and set $P = \bV_+(F_P)$ and $Q = \bV_+(F_Q)$.  A direct calculation shows that $P$ and $Q$ are both disjoint from $\bV_+(X_1)$.  
Furthermore, because $F_P + F_Q = X_1^2 + s^2 X_1^2 = (1 + s^2)X_1^2$ and $1+s^2$ is a unit, we see that $P$ and $Q$ are themselves disjoint.  Set $U = D_+(X_1)$.  Summarizing we have the following.

\begin{enumerate}
    \item $P \cap Q = \emptyset$.
    \item $P \cap \bV_{+}(X_1) = \emptyset$, in other words $P \subseteq U$.
    \item $Q \cap \bV_{+}(X_1) = \emptyset$, in other words $Q \subseteq U$.
\end{enumerate}

If we set $x = X_0/X_1$ to be the local variable on $U$, we see that the defining equations 
\[
    x^2 + s^2 x + a \;\;\;\text{ and }\;\;\; (x+1)^2 + s^2(x+1) + a
\]
both define subschemes isomorphic to $Z$ (for $P$, send $\beta \mapsto x$ and for the second send $\beta \mapsto x+1$).  Hence the subschemes $P, Q \subseteq Y$ are disjoint, affine and both are isomorphic to $Z$.  
We form the pushout diagram of schemes (Ferrand's \autoref{thm.FerrandGluing}) where $C$ is obtained by gluing the closed subschemes $P$ and $Q$ along their map to $Z$.
\begin{equation}
    \label{eq.PushoutSquareDefiningC}
\xymatrix{
    P \coprod Q \ar@{->>}[d]_{g} \ar@{^{(}->}[r]^-e & Y \ar@{->>}[d]^{\nu} \\
    Z \ar@{^{(}->}[r]_{j} & C
}
\end{equation}

We construct a certain involution $\sigma$ on $C$.  First consider the involution on $Y$ given by 
\[
    h : Y \to Y, \;\;\; [X_0 : X_1] \mapsto [X_0+s^2 X_1 : X_1].
\]
Note $h^2 = \id_Y$ as we are in characteristic $2$.
For the subscheme $P$, since
\[
    F_P \mapsto (X_0 + s^2 X_1)^2 + s^2 ((X_0+s^2 X_1 )X_1) + a X_1^2 = X_0^2 + 2s^4 X_1^2 + s^2 X_0 X_1 + aX_1^2 = F_P
\]
we see that $h(P) = P$.  A similar computation holds for $Q$ and so both $P$ and $Q$ are preserved by $h$.  Set $\tau : Z \to Z$ to be the map induced by $\iota$ from \autoref{eq.DefiningAutomorphismOnB}.  Under the identifications of $P$ and $Q$ with $Z$ above, we see $h$ induces the automorphism $\tau$  on both $P$ and $Q$.  Hence, we have a compatible automorphism on the diagram used to define $C$, \autoref{eq.PushoutSquareDefiningC}.  Therefore we have an automorphism 
\[  
    \sigma : C \to C
\] 
whose restriction to $j(Z)$ is $\tau$, which corresponds to $\iota$.  We note that $d_g \circ h^*$ is identified with $\iota \circ d_g$.  Summarizing all this, we have the following.
\begin{lemma}
    \label{lem.SigmaActionOnGluingSequence}
    With notation as above, $\sigma$ induces the following commutative diagram of short exact sequences of sheaves
    \[
    \xymatrix{
        0 \ar[r] &  \cO_C \ar[d] \ar[r] &  \nu_* \cO_{Y} \ar[d] \ar[r]^{d_g} & j_* \cO_Z \ar[d] \ar[r] &  0\\
        0 \ar[r] &  \sigma_* \cO_C \ar[r] &  \sigma_* \nu_* \cO_{Y} \ar[r]_-{\sigma_* d_g} & \sigma_* j_* \cO_Z \ar[r] & 0
    }
    \]
    where the right vertical map on $\cO_Z$ corresponds to $\iota$ and the middle vertical map is induced by $h$.
\end{lemma}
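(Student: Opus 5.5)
The argument uses the universal property of Ferrand's pushout (\autoref{thm.FerrandGluing}) together with the functoriality of the gluing sequence \autoref{eq.GluingSequenceGeneral}. First I will record the compatibility of the data. Let $h$ act on $Y$, let $\tau$ act on $Z$, and let $h|_{P\coprod Q}$ act on $W = P\coprod Q$; this restriction makes sense because $h(P)=P$ and $h(Q)=Q$. Then $e\circ h|_W = h\circ e$ and $g\circ h|_W = \tau\circ g$. The second identity is checked on each piece using the identifications $P\cong Z$ (via $\beta\mapsto x$) and $Q\cong Z$ (via $\beta\mapsto x+1$). On $P$ the substitution $x\mapsto x+s^2$ corresponds to $\beta\mapsto \beta+s^2=\iota(\beta)$. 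On $Q$ it sends $x+1\mapsto x+1+s^2$, which again corresponds to $\iota$. The pushout property then gives a unique $\sigma:C\to C$ with $\sigma\circ\nu=\nu\circ h$ and $\sigma\circ j = j\circ\tau$. Since $h^2=\id$ and $\tau^2=\id$, uniqueness forces $\sigma^2=\id$, so $\sigma$ is an automorphism.

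Next I will build the vertical maps. Pulling back along $\sigma$ gives ring maps $\cO_C\to\sigma_*\cO_C$. The relation $\sigma\nu=\nu h$ gives $h^\#:\nu_*\cO_Y\to \nu_*h_*\cO_Y=\sigma_*\nu_*\cO_Y$. The relation $\sigma j=j\tau$ gives $\tau^\#:j_*\cO_Z\to\sigma_*j_*\cO_Z$, which corresponds to $\iota$. The left square commutes because $\cO_C\to\nu_*\cO_Y$ is the comorphism of $\nu$ and $\sigma\nu=\nu h$. The right square is the only real check. It asks that $\iota\circ d_g = d_g\circ h^*$ with $d_g=\rho_P-\rho_Q$, so it suffices to verify $\iota\circ\rho_P=\rho_P\circ h^*$ and likewise for $Q$. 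Each $\rho$ is the composite of restriction to the piece with the inverse of its identification with $Z$. Restriction commutes with $h^*$ because $h$ preserves each piece, and the identification intertwines $h|_P$ (respectively $h|_Q$) with $\tau$ by the computation above. Since $\sigma$ is an isomorphism, the bottom row is exact, being the pushforward of an exact sequence along an isomorphism.

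The only place demanding care is the identification on $Q$: I must confirm that $h$ corresponds to $\iota$ there, and not to some other automorphism of $Z$. This reduces to the one-line check $x+1\mapsto (x+1)+s^2$. Everything else is formal.
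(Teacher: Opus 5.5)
Your proposal is correct and follows essentially the same route as the paper. The paper's proof just says it ``follows from the construction above'': $h$ preserves $P$ and $Q$ and induces $\tau$ under both identifications with $Z$, so the pushout produces $\sigma$, and $d_g\circ h^*$ is identified with $\iota\circ d_g$. You have written out those steps explicitly, including the check that $\sigma^2$ is the identity and the square-by-square verification, and everything checks out.
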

\begin{proof}
    This follows from the construction above.  
\end{proof}

\subsubsection{\texorpdfstring{The closed fiber $C_{s=0} \subseteq C$}{The closed fiber}}

Specializing \autoref{eq.GluingSequenceGeneral} to our situation, we have 
\[
    0 \to \cO_C \to \nu_* \cO_Y \xrightarrow{d_g} j_* \cO_Z \to 0.
\]
As every term is flat over $D$ (note $\cO_Y$ is flat since $Y = \bP^1_D$ and $\cO_C$ is torsion free as it is a subsheaf of $\cO_Y$), by base changing by $D/sD \cong k$, we obtain 
\begin{equation}
    \label{eq.GluingSequenceForCs=0}
    0 \to \cO_{C_{s=0}} \to \nu_* \cO_{\bP^1_k} \to j_* \cO_{Z_{s=0}} \to 0.
\end{equation}
This means that $C_{s=0} = C \times_D k$ is obtained by gluing two $B/sB = K$-points on $\bP^1_k$ together.  

\begin{remark}
    \label{rem.C0NotFSplit}
Although we will not need this, it is worth noting that $C_{s=0}$ is not globally $F$-split.  Indeed, if $C_{s = 0}$ were globally $F$-split, then the normalization $\bP^1_k$ would be compatibly split with the conductor (\cite[1.2.E, Exercise (4)]{BrionKumarFrobeniusSplitting}), which defines the subscheme $W_{s=0} = P_{s=0} \coprod Q_{s=0}$. But then by \cite[Theorem 1.2.8]{BrionKumarFrobeniusSplitting}, the map 
\[
    k^2 \cong H^0(\bP^1_k, \cO_{\bP^1_k}(1)) \to H^0(W_{s=0}, \cO_{W_{s=0}}) \cong K \oplus K
\]
is surjective, which is impossible.  
\end{remark}

\subsection{The scheme \texorpdfstring{$X$}{X}}

Now we form $Y' = C \times_D \bP^1_D = \bP^1_{C}$.  Note we have copies of $C$ at $0$ and $\infty$ given by 
\[
    e_0' : C^0 \to Y' \;\;\;\; \text{ and }\;\;\;\; e_{\infty}' : C^{\infty} \to Y'.
\]
We glue them together (again using Ferrand's \autoref{thm.FerrandGluing}), where the first is glued via the identity but the second is glued via $\sigma$.  
\begin{equation}
    \label{eq.SecondPushoutSquare}
    \xymatrix{
        C^0 \coprod C^{\infty} \ar@{^{(}->}[r]^-{e'} \ar[d]_{g' := \id_C \coprod \sigma}  & Y' \ar[d]^{\nu'} \\
        C \ar@{^{(}->}[r]_{j'} & X
    }
\end{equation}
Hence we identify $g'_0 : C^0 \to C$ as the identity and $g'_{\infty} : C^{\infty} \to C$ as $\sigma$.

A certain section ring of an ample line bundle on $X$ will be our non-$F$-injective ring.

\subsubsection{The closed fiber \texorpdfstring{$X_{s=0} \subseteq X$}{}}  
\label{subsubsec.ClosedFiberX0}
We specialize \autoref{eq.GluingSequenceGeneral} to our case and obtain:
\[
    0 \to \cO_X \to \nu'_* \cO_{Y'} \xrightarrow{d} j'_*\cO_C \to 0.
\]
We again base change by $D/sD \cong k$.  Similar to the above, all these sheaves are flat over $D$ and so we obtain a short exact sequence:
\[
    0 \to \cO_{X_{s=0}} \to \nu'_* \cO_{Y'_{s=0}} \xrightarrow{d} j'_* \cO_{C_{s=0}} \to 0.
\]
Hence $X_{s=0}$ is the gluing of $C^{\infty}_{s=0}$ to $C^{0}_{s=0}$ along $\id$ and $\sigma_{s=0}$.  But we notice that $\sigma_{s=0}$ is the identity as it is the identity on the $(s=0)$-base change of \autoref{eq.PushoutSquareDefiningC}.  It follows that $X_{s=0}$ is simply the scheme obtained by gluing $0$ and $\infty$ in $\bP^1_{C_{s=0}}$ to each other via the identity.  In particular, 
\[
    X_{s=0} \cong C_{s=0} \times_k N
\]
where $N$ is a nodal rational curve over $k$.  
This will be crucial in what follows.

\subsection{The line bundles on \texorpdfstring{$C$}{C} and on \texorpdfstring{$X$}{X}}

We first build an ample line bundle on $C$. Recall that $U = D_+(X_1) \subseteq Y = \bP_D^1$ is an affine chart that contains both $P$ and $Q$.  As the global section $X_1$ is nowhere vanishing on $U$, it gives an isomorphism $\cO_{P \coprod Q} \xrightarrow{\sim} e^* \cO_Y(1)$ induced by $1 \mapsto X_1$ (in other words, we view $\cO_Y(1)$ as the line bundle with poles of order at most $1$ along $V_+(X_1)$).  As $\cO_{P \coprod Q} \cong g^* \cO_Z$, we obtain an isomorphism $\phi_C : e^* \cO_Y(1) \to g^* \cO_Z$ (sending $X_1 \mapsto 1$).     

By \autoref{eq.DefiningALineBundle} and \autoref{eq.GluedLineBundlePullsBack}, this gives us a line bundle $\sN$ on $C$ such that  
\[
    \nu^* \sN = \cO_Y(1).
\]
Note as $\nu : Y \to C$ is finite surjective, $C$ is proper over $\Spec D$, and so we see that $\sN$ is ample \cite[Chapter I, Proposition 4.4]{Hartshorne.AmpleSubvarietiesBook}.  We keep track of $\sigma$ on this line bundle as well.
Again, by viewing $\cO_Y(1)$ as the sheaf with poles of order $1$ along $V_+(X_1)$, we see that $h : Y \to Y$ yields an isomorphism $h^* \cO_Y(1)$ with $\cO_Y(1)$ while simultaneously acting in a way compatible with $\tau$ on $P$ and $Q$.  Hence we also obtain an isomorphism:
\[
    \alpha : \sigma^* \sN \xrightarrow{\sim} \sN
\]
as well.  It is worth remarking that $\alpha \circ \sigma^* \alpha$ is the identity on $\sN$.

For any $t \geq 3$, we set 
\[
    \sM := \sN^{\otimes t}
\]
with corresponding $\alpha_{\sM} = \alpha^{\otimes t} : \sigma^* \sM \xrightarrow{\sim} \sM$.  

We will study the cohomology of this line bundle below, but for now we want to construct a line bundle on $X$.  First consider the line bundle on $Y' = C \times_D \bP^1_D$, 
\[
\sE = \sM \boxtimes \cO_{\bP^1_D}(t')
\]
for some $t' > 0$.  
Fix homogeneous coordinates $[T_0 : T_1]$ on $\bP^1_D$, the second factor of $Y'$, so that $T_0$ vanishes at $0$ (corresponding to $C^0$) and $T_1$ vanishes at $\infty$ (corresponding to $C^{\infty}$).
Via the canonical identifications $C^0 \cong C$, and identifying $T_1^{t'}$ with $1$, we obtain an isomorphism $e_0'^* \sE \cong \sM$.  Likewise, using the canonical $C^{\infty} \cong C$ and identifying $T_0^{t'}$ with $1$, we obtain an isomorphism $e_{\infty}'^* \sE \cong \sM$ (note there are other potential choices of isomorphism here, indeed these types of choices of isomorphism are why the picard group of the nodal rational curve is not simply $\bZ$).
On the other hand, via $g'$ we obtain:
\[
    g_1'^* \sM \cong \sM \;\;\;\;\text{ and } \;\;\;\; g_2'^*\sM \cong \sigma^* \sM.
\]
Using the identity on $C^0$ and the isomorphism $\alpha_{\sM} : \sM \xrightarrow{\sim} \sigma^* \sM$ on $C^{\infty}$, we have line bundles $\sM$ on $C$ and $\sE$ on $Y'$ as well as an isomorphism  
\begin{equation}
    \label{eq.PhiXDefinition}
    \phi_X : e'^* \sE \xrightarrow{\sim} g'^* \sM.
\end{equation}

Again, as in \autoref{subsec.FerrandGluing}, this gives us a line bundle $\sL$ on $X$ such that 
\[
    \nu'^* \sL \cong \sE \;\;\;\; \text{ and } \;\;\;\; j'^* \sL \cong \sM.
\]
Arguing just as before, since $\nu'$ is finite surjective and $\sE$ is ample, we also see that $\sL$ is ample (\cite[Chapter I, Proposition 4.4]{Hartshorne.AmpleSubvarietiesBook}).

\subsection{The section ring \texorpdfstring{$A$}{A}}
\label{subsec.TheSectionRing}
With notation as in the previous subsection, we define 
\begin{equation}
    A := \bigoplus_{n \geq 0} H^0(X, \sL^n)\label{eq:A-def}
\end{equation}
 
We shall prove that $A$ is not $F$-injective while $S = A/sA$ is $F$-injective.  As $A$ is graded over a DVR, after localizing at the homogeneous maximal ideal, this will prove the desired main result.

Although it will not be used in the sequel, we give an alternative description of the ring $A$ as a quotient of a polynomial ring over $D$ in \autoref{sec:presentation} for the values $t'=3$ and $t=6$.

\begin{remark}
    \label{rem.SufficientlyHighMultipleIsStandardGraded}
    By replacing $t$ and $t'$ with a sufficiently high multiple, we may assume that $A$ is generated in degree $A$ as a graded $R$-algebra over $A_0$.
\end{remark}

\section{Frobenius actions on cohomology of line bundles}

In this section we shall describe the Frobenius action on the cohomology of line bundles on $C, X$ and of the special fibers when $s = 0$.

\subsection{Cohomology on \texorpdfstring{$C$}{C}}

Our goal is to compute the cohomology of the structure sheaf $\cO_C$, and of $\sM^n$, on $C$.

\begin{lemma}
    \label{lem.CohomologyAndFrobeniusActionsOnOC}
    With notation as above, we have an isomorphism $H^0(C, \cO_C) = D$ and $H^1(C, \cO_C) \cong B$ which can be chosen compatibly with Frobenius.  Furthermore $H^1(C, \sM^n) = 0$ for all $n > 0$.  
\end{lemma}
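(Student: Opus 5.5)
The plan is to take cohomology of the gluing sequence \autoref{eq.GluingSequenceGeneral} on $C$,
\[
0 \to \cO_C \to \nu_* \cO_Y \xrightarrow{d_g} j_* \cO_Z \to 0,
\]
which is compatible with Frobenius by \autoref{eq.GluingSequenceFrobeniusCompatible}. Since $\nu$ and $j$ are finite (affine), we have $H^i(C, \nu_*\cO_Y) = H^i(\bP^1_D, \cO) $, which is $D$ for $i=0$ and $0$ for $i = 1$. Likewise $H^i(C, j_*\cO_Z) = H^i(\Spec B, \cO_Z)$, which is $B$ for $i=0$ and $0$ otherwise. The long exact sequence therefore reads
\[
0 \to H^0(C,\cO_C) \to D \xrightarrow{H^0(d_g)} B \to H^1(C, \cO_C) \to 0.
\]
The map $H^0(d_g)$ sends a constant $c \in D$ to $\rho_1(c) - \rho_2(c) = c - c = 0$, because both $\rho_i$ are ring maps restricting the constant $c$ to $P$ and $Q$. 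Hence $H^0(C,\cO_C) = D$ and the connecting map $\delta: B \to H^1(C,\cO_C)$ is an isomorphism.

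For Frobenius compatibility, I use the morphism of long exact sequences induced by \autoref{eq.GluingSequenceFrobeniusCompatible}. The connecting homomorphism commutes with the Frobenius maps, and the vertical map on $H^0(j_*\cO_Z) = B$ is the Frobenius $b \mapsto b^2$ of $B$. So under $\delta$ the Frobenius on $H^1(C,\cO_C)$ corresponds to the absolute Frobenius of $B$. On $H^0$ the identification with $D$ likewise carries Frobenius to squaring. The only care needed is to check that the identification $H^0(Z, F_*\cO_Z) = F_* B$ matches squaring, which is immediate.

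For $\sM^n = \sN^{tn}$, the plan is to tensor the gluing sequence with $\sM^n$ and use \autoref{lem.LineBundleSequence} together with the projection formula. This gives
\[
0 \to \sM^n \to \nu_* \cO_Y(tn) \xrightarrow{\Delta} j_* \cO_Z \to 0,
\]
where we use $\nu^*\sM^n \cong \cO_Y(tn)$ and $j^*\sM^n \cong \cO_Z$; the latter holds because $\sN$ is trivialized on $Z$ by $\phi_C$. Since $H^1(\bP^1_D, \cO(tn)) = 0$ and $Z$ is affine, $H^1(C,\sM^n)$ is the cokernel of
\[
H^0(\bP^1_D, \cO(tn)) \to B,
\]
and it suffices to show this map is surjective. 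In the chart $U$, with $x = X_0/X_1$ and sections written as polynomials $f(x)$ of degree $\le tn$ over $D$, the map sends $f \mapsto f(\beta) - f(\beta+1)$, using the identifications $x \mapsto \beta$ on $P$ and $x \mapsto \beta + 1$ on $Q$, and $X_1 \mapsto 1$.

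The main obstacle is this surjectivity computation; this is where the hypothesis $t \geq 3$ (so $tn \geq 3$) enters. I will compute directly. The constant $f = 1$ maps to $0$, and $f = x$ maps to $\beta - (\beta+1) = 1$. For $f = x^2$ the image is
\[
\beta^2 - (\beta+1)^2 = 1,
\]
since we are in characteristic $2$. For $f = x^3$ the image is
\[
\beta^3 + (\beta+1)^3 = \beta^2 + \beta + 1 = (1+s^2)\beta + (1+a),
\]
using the relation $\beta^2 = s^2\beta + a$. Subtracting $(1+a)$ times the image of $x$ leaves $(1+s^2)\beta$, and $1+s^2$ is a unit in $D$. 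Hence the image contains both $1$ and $\beta$, which generate $B$ as a $D$-module. This gives surjectivity, so $H^1(C, \sM^n) = 0$ for all $n > 0$.
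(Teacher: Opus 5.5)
Your proposal is correct and follows essentially the same route as the paper. You take the long exact sequence of the Frobenius-compatible gluing sequence, using that $H^0(d_g)=0$ on constants, and then reduce the vanishing to surjectivity of $H^0(\mathbb{P}^1_D,\mathcal{O}(tn))\to B$, which follows from $x\mapsto 1$ and $x^3\mapsto (1+s^2)\beta+(1+a)$ with $1+s^2$ a unit. Your additional check that $x^2\mapsto 1$ is harmless but not needed.
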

\begin{proof}
By the construction of $C$ and \autoref{eq.GluingSequenceGeneral}, we have the following short exact sequence:
\begin{equation}
    \label{eq.PullbackSESforC}
    0 \to \cO_C \to \nu_* \cO_Y \xrightarrow{d_g} j_* \cO_Z \to 0
\end{equation}
where we recall that $d_g$ is given by $d_g(f) = \rho_1(f) + \rho_2(f)$ as described in \autoref{subsec.FerrandGluing} (the $\rho_i$ come from the isomorphisms $P, Q \cong Z$).  Explicitly, working on the affine chart $U = D_+(X_1)$ with $x = X_0/X_1$, then $\rho_1$ restricts to $\rho_P : D[x] \to B$, the $D$-algebra map defined by sending $x \mapsto \beta$; while $\rho_2$ restricts to $\rho_Q : D[x] \to B$, the $D$-algebra map defined by sending $x+1 \mapsto \beta$ (or equivalently, $x \mapsto \beta+1$).  In particular, $d_g$ is zero on $D$ while $d_g(x) = \beta + (\beta+1) = 1$.  

As $Z = \Spec B$, we see that $H^0(Z, \cO_Z) = B$.  Since $Y = \bP^1_D$ we have $H^1(Y, \cO_Y) = 0$ and $D = H^0(Y, \cO_Y)$ so that $H^0(C, d_g)$ is the zero map.  The long exact sequence on cohomology then becomes: 
\[
    0 \to H^0(C, \cO_C) \to D \xrightarrow{0} B \to H^1(C, \cO_C) \to 0.
\]
Thus we obtain the desired isomorphism.  The compatibility with Frobenius follows as Frobenius is compatible with \autoref{eq.PullbackSESforC} (as described in \autoref{eq.GluingSequenceFrobeniusCompatible}).

Twisting the sequence \autoref{eq.PullbackSESforC} by the line bundle $\sM^n$, and recalling that $\nu^* \sM^n \cong \cO_Y(t n)$, for some $t \geq 3$, and $j^* \sM^n \cong \cO_Z$ we obtain 
\begin{equation}
    \label{eq.TwistedSheafSequenceForC}
    0 \to \sM^n \to \nu_* \cO_Y(tn) \xrightarrow{d_n} j_* \cO_Z \to 0.
\end{equation}
We have a $D$-module basis of global sections $X_0^{tn}, X_0^{tn-1} X_1, \dots, X_1^{tn}$ for $H^0(Y, \cO_Y(tn))$.  We can see where $d_n$ sends these by restricting to $U$ (as $Z \subseteq U = \Spec D[x]$ with $x = X_0/X_1$).  In particular, as $t \geq 3$ and $n > 0$,
\[
d_n(X_0 X_1^{tn-1}) = d_n(X_0 X_1^{tn-1}|_U) = d_n(x) = \rho_P(x) + \rho_Q(x) = \beta + (\beta+1) = 1
\]
and likewise
\[
d_n(X_0^3 X_1^{tn - 3}) = d_n(x^3) = \beta^3 + (\beta+1)^3 = \beta^2 + \beta + 1 = (s^2 + 1)\beta + (1+a).
\]
Together, these two images generate $B$ as a $D$-module because $1+s^2 \in D$ is a unit.  Hence $H^0(Y, d_n)$ is surjective and we have the following long exact sequence, 
\[
    0 \to H^0(C, \sM^{n}) \to H^0(Y, \cO_Y(tn)) \twoheadrightarrow B \xrightarrow{0} H^1(C, \sM^n) \to H^1(Y, \cO_Y(tn))
\]
As $Y = \bP^1_D$ and $tn \geq 0$, we see this forces $H^1(C, \sM^n) = 0$ which completes the proof of the lemma.
\end{proof}

\subsection{Cohomology on \texorpdfstring{$X$}{X}}

Recall we have a line bundle $\sE = \sM \boxtimes \cO_{\bP^1_D}(t') = p^* \sM \otimes q^* \cO_{\bP^1_D}(t')$ on $Y' = C \times_D \bP^1_D$ where $p$ and $q$ are the projections.  We also have a line bundle $\sL$ on $X$ with $\sE = \nu'^* \sL$ and $\sM = j'^* \sL$.  

\begin{lemma}
    \label{lem.CohomologyOfX}
    With notation as above, $H^0(X, \cO_X) = D$ while $H^i(X, \sL^n) = 0$ for all $i > 0$ and $n > 0$.
\end{lemma}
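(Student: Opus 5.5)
We will use the gluing sequence for $X$ from \autoref{eq.GluingSequenceGeneral}, applied to the pushout \autoref{eq.SecondPushoutSquare}:
\[
0 \to \cO_X \to \nu'_* \cO_{Y'} \xrightarrow{d} j'_* \cO_C \to 0,
\]
together with its twist by $\sL^n$ from \autoref{lem.LineBundleSequence}:
\[
0 \to \sL^n \to \nu'_* \sE^n \xrightarrow{\Delta_n} j'_* \sM^n \to 0.
\]
Here we use $\nu'^*\sL^n \cong \sE^n$ and $j'^*\sL^n \cong \sM^n$.

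\textbf{Step 1: the case $n = 0$.} By Künneth on $Y' = C \times_D \bP^1_D$, which is flat over $D$, we get $H^0(Y',\cO_{Y'}) = H^0(C,\cO_C) = D$ by \autoref{lem.CohomologyAndFrobeniusActionsOnOC}. The map $H^0(d)$ sends a constant $f\in D$ to $f|_{C^0} - \sigma$-twisted $f|_{C^\infty}$, which is $f - f = 0$, since $\sigma$ acts trivially on $D$-constants. Hence $H^0(X,\cO_X) = D$.

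\textbf{Step 2: vanishing on $Y'$.} For $n>0$ we compute $H^i(Y',\sE^n)$ by Künneth (legitimate since everything is flat over $D$ and $\bP^1_D$ is free):
\[
H^m(Y',\sE^n) = \bigoplus_{a+b=m} H^a(C,\sM^n)\otimes_D H^b(\bP^1_D,\cO(t'n)).
\]
By \autoref{lem.CohomologyAndFrobeniusActionsOnOC}, $H^1(C,\sM^n)=0$, and $C$ is a curve over $D$, so its higher cohomology vanishes. Also $H^1(\bP^1_D,\cO(t'n))=0$. Hence $H^i(Y',\sE^n)=0$ for $i>0$. Since $\nu'$ is finite, the same holds for $\nu'_*\sE^n$. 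Likewise $H^i(C,\sM^n)=0$ for $i>0$.

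\textbf{Step 3: the long exact sequence.} The long exact sequence gives $H^i(X,\sL^n)=0$ for $i\ge 2$ immediately. For $i=1$ it gives
\[
H^1(X,\sL^n) = \operatorname{coker}\big(H^0(Y',\sE^n)\to H^0(C,\sM^n)\oplus\text{(nothing)}\big),
\]
where the map is $H^0(\Delta_n)$, sending $y \mapsto y|_{C^0} - \alpha_{\sM}(y|_{C^\infty})$ with the identifications $T_1^{t'n}\mapsto 1$ and $T_0^{t'n}\mapsto 1$.

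\textbf{Main obstacle: surjectivity of $H^0(\Delta_n)$.} We establish it directly. Given $m\in H^0(C,\sM^n)$, take
\[
y = p^*m\otimes T_1^{t'n} \in H^0(Y',\sE^n).
\]
Its restriction to $C^0$ is $m$, and its restriction to $C^\infty$ vanishes because $T_1$ vanishes at $\infty$. Here $t'n\ge 1$ is needed, so that $T_0$ and $T_1$ have distinct monomials. Hence $\Delta_n(y)=m$, and $H^0(\Delta_n)$ is surjective. The only care required is checking that the restriction conventions of \autoref{lem.LineBundleSequence} match the chosen trivializations at $0$ and $\infty$. Beyond this, no property of $\sigma$ is needed. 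This gives $H^1(X,\sL^n)=0$.
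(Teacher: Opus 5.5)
Your proposal is correct and follows essentially the same route as the paper. The paper likewise gets $H^0(X,\cO_X)=D$ from $H^0(d)=0$ on the constants, kills $H^i(Y',\sE^n)$ for $i>0$ by K\"unneth, and proves $H^0(\Delta_n)$ surjective using the section $m\otimes T_1^{t'n}$, which restricts to $m$ on $C^0$ and to $0$ on $C^\infty$ (your remark that $t'n\geq 1$ is needed here is right), and the vanishing of $H^{\geq 2}(C,\sM^n)$ that you attribute to $C$ being a relative curve also follows directly from the gluing sequence $0\to\sM^n\to\nu_*\cO_Y(tn)\to j_*\cO_Z\to 0$.
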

\begin{proof}
    Fix an integer $n > 0$.  
    By the construction of $X$ and \autoref{eq.GluingSequenceGeneral}, we have a short exact sequence:
    \[
        0 \to \cO_{X} \to \nu'_* \cO_{Y'} \xrightarrow{d_{g'}} j'_* \cO_C \to 0
    \]
    As $Y' = C \times_D \bP^1_D = \bP^1_C$ and $H^0(C, \cO_C) = D$, and since $d_{g'}$ acts as zero on $D$ (as it is a difference of $D$-linear maps), we see that $H^0(X, \cO_X) = D$.  

    We twist by $\sL^n$ and obtain the following short exact sequence from \autoref{lem.LineBundleSequence}:
    \begin{equation}
        \label{eq.TwistedSequenceOfSheavesOnX}
        0 \to \sL^n \to \nu'_* (\sM^n \boxtimes \cO_{\bP^1_D}(t'n)) \xrightarrow{\Delta_{\phi_X}} j'_* \sM^n \to 0.
    \end{equation}
    where $\phi_X$ was defined immediately before \autoref{eq.PhiXDefinition} and $\Delta_{\phi_X}$ is defined in \autoref{lem.LineBundleSequence}.  Now, because $H^i(C, \sM^n) = 0$ for all $i > 0$ and $H^j(\bP^1_D, \cO_{\bP^1_D}(t'n)) = 0$ for all $j > 0$, and using that the zeroth cohomologies are $D$-flat, the K\"unneth formula gives 
    \[
        H^i(Y', \sM^n \boxtimes \cO_{\bP^1_D}(t'n)) = 0
    \]
    for all $i > 0$.  Therefore, to prove the lemma, by examining the long exact sequence on cohomology coming from \autoref{eq.TwistedSequenceOfSheavesOnX}, we see it suffices to prove that 
    \[
        H^0(X, \Delta_{\phi_X}) : H^0(Y', \sM^n \boxtimes \cO_{\bP^1_D}(t'n)) \to H^0(C, \sM^n)
    \]
    is surjective.

    Explicitly, $H^0(X, \Delta_{\phi_X})$ sends $u \in H^0\big( C \times_D \bP^1_D, \sM^n \boxtimes \cO_{\bP^1_D}(t'n)\big)$ to 
    \[
        \rho_1^{\phi_X}(u) + \rho_2^{\phi_X}(u)
    \]
    We write $[T_0 : T_1]$ for the homogeneous coordinates on the $\bP^1_D$-factor of $Y'$ so that for any $v \in H^0(C, \sM^n)$, $v \otimes T_1^{t'n}$ restricts to $v$ on $C^0 = \bV_+(T_0)$ and to $0$ on $C^{\infty} = \bV_+(T_1)$.  In particular, $\Delta_{\phi_X}(v \otimes T_1^{t'n}) = v$.  This finishes the proof of the lemma.
\end{proof}

\subsection{Frobenius action on cohomology}

We first write down the action $(\id + \iota) = (\id - \iota)$ on $B$ (the equality holds because we work in characteristic $2$).
For an arbitrary $c + d \beta \in B$ with $c,d \in D$, we have:
\[
    (\id + \iota)(c + d \beta) = (c + d \beta) + (c + d(\beta + s^2)) = d s^2
\]
We also record the action of the absolute Frobenius on $B$ in terms of this basis:
\[
    F_B(c + d\beta) = c^2 + d^2 \beta^2 = (c^2 + d^2 a) + d^2 s^2 \beta.
\]
\begin{lemma}
    \label{lem.FrobeniusCommutesWithIdPlusIota}
    With notation as above:  $(\id + \iota) \circ F_B = F_B \circ (\id + \iota)$
\end{lemma}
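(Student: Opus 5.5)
The plan is to give a structural argument first and then confirm it with the explicit formulas recorded just above.

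\textbf{Structural argument.} Since $B$ has characteristic $2$, the absolute Frobenius $F_B(x) = x^2$ is a ring endomorphism of $B$; in particular it is additive. Any ring endomorphism commutes with squaring, so the $D$-algebra automorphism $\iota$ of \autoref{eq.DefiningAutomorphismOnB} satisfies $\iota(x^2) = \iota(x)^2$, that is, $\iota \circ F_B = F_B \circ \iota$. For $x \in B$ we can then compute
\[
    F_B\big((\id + \iota)(x)\big) = F_B(x) + F_B(\iota(x)) = F_B(x) + \iota(F_B(x)) = (\id + \iota)\big(F_B(x)\big).
\]
The first equality uses additivity of $F_B$, and the second uses that $\iota$ commutes with $F_B$. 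This already proves the lemma.

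\textbf{Explicit check.} As a sanity check, I would also verify the identity in the $D$-basis $\{1, \beta\}$ of $B$. Write $x = c + d\beta$ with $c, d \in D$.
\begin{itemize}
    \item Using the displayed formula for $F_B$ and then the formula $(\id + \iota)(c' + d'\beta) = d' s^2$, we get $(\id+\iota)(F_B(x)) = (d^2 s^2)\, s^2 = d^2 s^4$.
    \item In the other order, $F_B\big((\id+\iota)(x)\big) = F_B(d s^2) = d^2 s^4$, since $d s^2 \in D$ and $F_B$ restricted to $D$ is just squaring.
\end{itemize}
The two sides agree.

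There is no real obstacle here. The only point needing care is that $\id + \iota$ is merely additive and not multiplicative, so one must not try to commute it with $F_B$ by treating it as a ring map. The argument only uses additivity of $F_B$ together with multiplicativity of $\iota$.
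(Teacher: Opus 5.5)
Your proposal is correct. The explicit check in your second part is exactly the paper's proof: the authors evaluate both composites on $c+d\beta$ using the displayed formulas for $F_B$ and $\id+\iota$, and get $d^2s^4$ each time.

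Your primary, structural argument is a genuinely different route. It uses only two facts: $F_B$ is additive in characteristic $2$, and $\iota$, being a ring endomorphism, commutes with $F_B$. So it never touches the basis $1,\beta$ or the relation $\beta^2=s^2\beta+a$. What this buys is generality and an explanation. The same two lines show that $\id-\iota$ commutes with Frobenius for any ring endomorphism $\iota$ of any ring of characteristic $p$. This is the same mechanism behind the Frobenius compatibility of the gluing sequences in \autoref{eq.GluingSequenceFrobeniusCompatible}, where $d_g$ is likewise a difference of ring maps.

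The paper's computation is a direct verification that reuses formulas the authors need anyway. Those formulas give the kernel and cokernel in \autoref{eq.KerAndCokerOfIdPlusIota} and the Frobenius action in \autoref{eq.FrobeniusActionOnCokerIdPlusIota}. It is less illuminating, but it requires nothing beyond those two displayed formulas.
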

\begin{proof}
    Note we have 
    \[
        ((\id + \iota)\circ F_B)(c+d\beta) = (d^2 s^2) s^2 = d^2 s^4
    \]
    which agrees with 
    \[
        (F_B \circ (\id + \iota))(c + d \beta) = F_B(d s^2) = d^2 s^4.
    \]
\end{proof}

We view $B$ as a $D$-module with fixed $D$-basis $1, \beta$.
We then see that 
\begin{equation}
    \label{eq.KerAndCokerOfIdPlusIota}
    \ker(\id + \iota) = D \cdot 1 \oplus 0 \cdot \beta = D \subseteq B\;\;\;\; \text {and } \;\;\;\;\coker(\id +\iota) = (D/s^2 D) \cdot [1] \oplus D \cdot [\beta]
\end{equation} 
where $[1]$ and $[\beta]$ denote the image of $1$ and $\beta$ in the quotient.
By \autoref{lem.FrobeniusCommutesWithIdPlusIota}, as Frobenius commutes with $(\id + \iota)$, we have a Frobenius action on $\coker(\id +\iota)$ which can be described by its action on the basis elements as
\begin{equation}
    \label{eq.FrobeniusActionOnCokerIdPlusIota}
    F([1]) = [1] \;\;\; \text{ and } \;\;\; F([\beta]) = a [1] + s^2 [\beta].
\end{equation}
Finally, note that $F_K(K) = k \subsetneq K$ which we also can see from our original description $K \cong k(a^{1/2})$.

\begin{lemma}
    \label{lem.DescriptionAndFrobeniusActionOnCohomologyOfX}
    With notation as above, $H^1(X, \cO_X) \cong D^2$ and $H^2(X, \cO_X) \cong \coker(\id + \iota) = (D/s^2 D) \cdot [1] \oplus D \cdot [\beta]$.  Furthermore, the Frobenius action on $H^2(X, \cO_X)$ coincides via this isomorphism with the Frobenius action on $\coker(\id + \iota)$ described above in \autoref{eq.FrobeniusActionOnCokerIdPlusIota}.
\end{lemma}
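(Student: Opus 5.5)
We will use the long exact sequence in cohomology attached to the gluing sequence for $X$,
\[
0 \to \cO_X \to \nu'_* \cO_{Y'} \xrightarrow{d_{g'}} j'_* \cO_C \to 0,
\]
together with the cohomology of $Y' = \bP^1_C$ and of $C$ from \autoref{lem.CohomologyAndFrobeniusActionsOnOC}.

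\emph{Cohomology of $Y'$.} Since $Y' = C \times_D \bP^1_D$ and $H^i(\bP^1_D,\cO)$ is $D$ for $i=0$ and $0$ for $i>0$, the K\"unneth formula (or the projection $Y' \to C$ with $R\pi_*\cO_{Y'} = \cO_C$) gives
\[
H^i(Y',\cO_{Y'}) = H^i(C,\cO_C).
\]
Thus $H^0(Y',\cO_{Y'}) = D$, $H^1(Y',\cO_{Y'}) \cong B$ and $H^2(Y',\cO_{Y'}) = 0$, the last because $C$ is one-dimensional over $D$ and $\cO_C$ is quasi-coherent.

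\emph{The map on cohomology.} The map $H^i(d_{g'})$ is $\rho_0^* - \rho_\infty^*$, where $\rho_0$ and $\rho_\infty$ are restriction to $C^0$ and $C^\infty$ followed by $g'_0 = \id$ and $g'_\infty = \sigma$ respectively. Both restrictions $H^i(Y',\cO) \to H^i(C,\cO_C)$ are the Künneth identification, since the sections $C^0, C^\infty$ of $Y' \to C$ both compose to the identity. Hence $H^i(d_{g'})$ becomes $\id - \sigma^*$ on $H^i(C,\cO_C)$.
\begin{itemize}
\item For $i=0$ the map is zero, since $\sigma^*$ fixes $D$.
\item For $i=1$, \autoref{lem.SigmaActionOnGluingSequence} shows that under $H^1(C,\cO_C) \cong \coker(D \xrightarrow{0} B) = B$ the map $\sigma^*$ becomes $\iota$. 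Here $\sigma^*$ acts on the degree-zero term $D$ by the identity and on $B$ by $\iota$ via the connecting map.
\end{itemize}
The long exact sequence therefore reads
\[
0 \to D \to D \xrightarrow{0} D \to H^1(X,\cO_X) \to B \xrightarrow{\id+\iota} B \to H^2(X,\cO_X) \to H^2(Y',\cO_{Y'}) = 0.
\]
From this:
\begin{itemize}
\item $H^2(X,\cO_X) \cong \coker(\id+\iota)$, whose structure was recorded in \autoref{eq.KerAndCokerOfIdPlusIota}.
\item $H^1(X,\cO_X)$ is an extension $0 \to D \to H^1(X,\cO_X) \to \ker(\id+\iota) = D \to 0$. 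This splits because $D$ is free, so $H^1(X,\cO_X) \cong D^2$.
\end{itemize}

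\emph{Frobenius compatibility.} The gluing sequence is Frobenius compatible by \autoref{eq.GluingSequenceFrobeniusCompatible}, so the connecting map $H^1(C,\cO_C) \to H^2(X,\cO_X)$ intertwines Frobenius. By \autoref{lem.CohomologyAndFrobeniusActionsOnOC}, $H^1(C,\cO_C) \cong B$ compatibly with Frobenius, so the Frobenius on $H^1(C,\cO_C)$ is $F_B$. It follows that the induced action on the cokernel is the one in \autoref{eq.FrobeniusActionOnCokerIdPlusIota}, which is well defined by \autoref{lem.FrobeniusCommutesWithIdPlusIota}.

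\emph{Main obstacle.} The delicate step is identifying $H^1(d_{g'})$ with $\id + \iota$ rather than some twist of it. This requires checking two things:
\begin{itemize}
\item the Künneth identifications for the two sections $C^0$ and $C^\infty$ agree;
\item $\sigma^*$ on $H^1(C,\cO_C)$ matches $\iota$ under the isomorphism from the connecting map.
\end{itemize}
The second uses naturality of the connecting homomorphism under the morphism of short exact sequences in \autoref{lem.SigmaActionOnGluingSequence}. On the middle term $H^0(Y,\cO_Y) = D$ this morphism acts by the identity, since $h$ is a $D$-automorphism.
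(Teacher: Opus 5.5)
Your proposal is correct and follows the paper's proof essentially step for step: the long exact sequence of the gluing sequence for $X$, Künneth to get $H^i(Y',\cO_{Y'}) = H^i(C,\cO_C)$, identifying $H^1(d_{g'})$ with $\id+\iota$ via \autoref{lem.SigmaActionOnGluingSequence}, and Frobenius compatibility from \autoref{eq.GluingSequenceFrobeniusCompatible} together with \autoref{lem.CohomologyAndFrobeniusActionsOnOC}. Your observation that both sections $C^0, C^{\infty}$ compose with the projection $Y' \to C$ to the identity is a clean way to see that both restriction maps equal the Künneth identification. For $H^2(C,\cO_C)=0$, the gluing sequence for $C$ gives a more precise reason than ``one-dimensional over $D$'': $H^2(\bP^1_D,\cO)=0$ and $Z$ is affine.
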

\begin{proof}
    Again we begin with $0 \to \cO_{X} \to \nu'_* \cO_{Y'} \xrightarrow{d_{g'}} j'_* \cO_C \to 0$ and we consider the long exact sequence on cohomology:
    \[
        \xymatrix@R=10pt@C=40pt{
            & \dots  \ar[r] & H^0(Y', \cO_{Y'}) \ar[r]^{H^0(d_{g'})} & H^0(C, \cO_C) \\
            \ar[r] & H^1(X, \cO_X) \ar[r] & H^1(Y', \cO_{Y'}) \ar[r]^{H^1(d_{g'})} & H^1(C, \cO_C) \\
            \ar[r] & H^2(X, \cO_X) \ar[r] & H^2(Y', \cO_{Y'}) \ar[r] & 0.
        }
    \]
    As $H^0(Y', \cO_{Y'}) = D$, we see that $H^0(d_{g'})$ is zero.  As $Y' = C \times_D \bP^1_D$, the K\"unneth formula implies that $H^1(Y', \cO_{Y'}) = H^1(C, \cO_C) \cong B$ (see \autoref{lem.CohomologyAndFrobeniusActionsOnOC}) and $H^2(Y', \cO_{Y'}) = 0$.  Hence our sequence simplifies to a sequence of $D$-modules
    \[
    0 \to D \to H^1(X, \cO_X) \to B \xrightarrow{H^1(d_{g'})} B \to H^2(X ,\cO_X) \to 0
    \]
    compatible with Frobenius.

    \begin{claim}
        The map $H^1(d_{g'})$ is identified with $(\id + \iota)$.
    \end{claim}
    \begin{proof}[Proof of claim]
        Unraveling the definition of $H^1(d_{g'})$ we see it is the difference of two maps.  The first of these is the map $\gamma_1 : H^1(Y', \cO_{Y'}) \to H^1(C, \cO_C)$ induced by restricting a section to $C^0$ followed by the canonical isomorphism $C^0 \xrightarrow{\sim} C$.  The second of these is the map $\gamma_2$ induced by restricting to $C^{\infty}$, using the canonical isomorphism $C^{\infty} \xrightarrow{\sim} C$ followed by $\sigma : C \to C$.  
        
        By K\"unneth and \autoref{lem.CohomologyAndFrobeniusActionsOnOC}, $B \cong H^1(C, \cO_C) \otimes_D H^0(\bP^1_D, \cO_{\bP^1_D}) \cong H^1(Y', \cO_{Y'})$ and so we see $b \in H^1(Y', \cO_{Y'})$ is sent by $\gamma_1$ to the corresponding element of $B \cong H^1(C, \cO_C)$.  We also see that the involution $\sigma$ induces $\iota$ on $B \cong H^1(C, \cO_C)$ by \autoref{lem.SigmaActionOnGluingSequence}.  Hence $\gamma_2$ sends $b$ to $\iota(b)$.  This proves the claim.
    \end{proof}
    With the claim in place and applying \autoref{eq.KerAndCokerOfIdPlusIota}, we obtain a short sequence of $D$-modules $0 \to D \to H^1(X, \cO_X) \to D \to 0$ proving $H^1(X, \cO_X) \cong D^2$.  We also obtain an isomorphism $\coker(1 + \iota) = (D/s^2 D) \cdot [1] \oplus D \cdot [\beta] \cong H^2(X, \cO_X)$.  This and the compatibility with Frobenius we observed above prove the lemma.
\end{proof}

\subsection{Frobenius actions on cohomology of the closed fiber}
    \label{subsec.FrobeniusActionOnCohomologyClosedFiber}
    
First we need to describe the cohomology of the closed fiber.
Recall from \autoref{subsubsec.ClosedFiberX0} that  $X_{s=0} \cong C_{s=0} \times_k N$ where $N$ is a rational nodal curve.  We first compute cohomology on $C_{s=0}$.  
Taking cohomology of \autoref{eq.GluingSequenceForCs=0} gives the long exact sequence
\begin{equation}
    \label{eq.CohomologyCalculatorSequenceForCs=0}
    0 \to H^0(C_{s=0}, \cO_{C_{s=0}}) \to H^0(\bP^1_k, \cO_{\bP^1_k}) \xrightarrow{0=\overline{d_g}} B/sB \to H^1(C_{s=0}, \cO_{C_{s=0}}) \to 0
\end{equation}
and so 
\[
    H^0(C_{s=0}, \cO_{C_{s=0}}) = k \;\;\;\; \text{ and } \;\;\;\; H^1(C_{s=0}, \cO_{C_{s=0}}) \cong B/sB = K.
\]
Similarly, or classically, 
\[
    H^0(N, \cO_N) = k \;\;\;\; \text{ and } \;\;\;\; H^1(N, \cO_N) \cong k.
\]
The K\"unneth formula then gives that 
\begin{equation}
    \label{eq.CohomologyOfX0}
    H^0(X_{s=0}, \cO_{X_{s=0}}) = k, \;\;\;\; H^1(X_{s=0}, \cO_{X_{s=0}}) \cong k \oplus K  \;\;\;\; \text { and } \;\;\;\; H^2(X_{s=0}, \cO_{X_{s=0}}) \cong k \otimes K \cong K.
\end{equation}

\begin{lemma}
    \label{lem.FrobeniusInjectiveOnStructureSheafOfSpecialFiber}
    Frobenius acts injectively on $H^i(X_{s=0}, \cO_{X_{s=0}})$ for all $i \geq 0$.
\end{lemma}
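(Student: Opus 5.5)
The plan is to compute the Frobenius action on each cohomology group explicitly, using the product decomposition $X_{s=0} \cong C_{s=0} \times_k N$ from \autoref{subsubsec.ClosedFiberX0} and the Künneth description \autoref{eq.CohomologyOfX0}. The key input is that Künneth is compatible with Frobenius. The Künneth isomorphism is given by $a \otimes b \mapsto p^*a \cup q^*b$, where $p,q$ are the two projections. Absolute Frobenius commutes with pullback along any morphism, and it acts multiplicatively on cup products because it is induced by the ring map $f \mapsto f^2$ on structure sheaves. Hence Frobenius on $H^\bullet(X_{s=0}, \cO_{X_{s=0}})$ corresponds to $a \otimes b \mapsto F(a) \otimes F(b)$, which is additive and $F_k$-semilinear. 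I expect pinning this compatibility down precisely to be the only real obstacle; everything else is a small computation.

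Next I would determine Frobenius on the factors. For $C_{s=0}$, the sequence \autoref{eq.CohomologyCalculatorSequenceForCs=0} is compatible with Frobenius by the $s=0$ base change of \autoref{eq.GluingSequenceFrobeniusCompatible}. Its connecting map identifies $H^1(C_{s=0}, \cO_{C_{s=0}})$ with $B/sB = K$, and under this identification Frobenius becomes the absolute Frobenius $F_K$ of the field $K$, which is injective. Also $H^0 = k$ with Frobenius $F_k$. For the nodal curve $N$, the analogous gluing sequence $0 \to \cO_N \to \nu_*\cO_{\bP^1_k} \to k \to 0$ (gluing two $k$-points) gives $H^1(N, \cO_N) \cong \coker(k \xrightarrow{0} k) = k$ with Frobenius $F_k$. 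Concretely, there is a generator $e$ with $F(e) = e$. Likewise $H^0(N, \cO_N) = k$ with Frobenius $F_k$.

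Now I would treat each degree in turn.
\begin{enumerate}
\item In degree $0$, $H^0 = k$ and Frobenius is $F_k$, which is injective.
\item In degree $1$, $H^1 \cong (K \otimes_k H^0(N)) \oplus (H^0(C_{s=0}) \otimes_k H^1(N)) \cong K \oplus k$. Frobenius preserves each summand, acting as $F_K$ on the first and $F_k$ on the second, so it is injective.
\item In degree $2$, $H^2 \cong K \otimes_k k e$, and every element has the form $c \otimes e$. Frobenius sends $c \otimes e$ to $F_K(c) \otimes e$, which vanishes only when $c = 0$.
\end{enumerate}
All higher cohomology vanishes since $X_{s=0}$ is a product of curves.

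One subtlety should be flagged in the degree $2$ step. A tensor product of injective semilinear maps need not be injective; for example, $F_K \otimes F_K$ on $K \otimes_k K$ fails to be injective. The argument works here precisely because the $N$-factor is one-dimensional and spanned by a Frobenius-fixed element $e$. So I would write out this step explicitly rather than appeal to a general principle.
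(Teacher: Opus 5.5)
Your proposal is correct and follows essentially the paper's route: Künneth for $X_{s=0} \cong C_{s=0} \times_k N$, plus injectivity of Frobenius on $H^1(C_{s=0}, \cO_{C_{s=0}}) \cong K$ via the gluing sequence and on $H^1(N, \cO_N)$. The paper obtains the latter from $F$-splitness of $N$ rather than from the gluing sequence, and simply asserts that the $i=1,2$ cases follow. Your explicit degree-$2$ argument, which uses that $H^1(N,\cO_N)$ is one-dimensional with a Frobenius-fixed generator, fills in exactly the step the paper leaves implicit.
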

\begin{proof}
    Since $X_{s=0}$ is reduced, the $i =0$ case is immediate.  First, we recall that Frobenius acts compatibly on \autoref{eq.CohomologyCalculatorSequenceForCs=0}, see \autoref{eq.GluingSequenceFrobeniusCompatible}.  Hence since Frobenius acts injectively on $B/sB = K$, it also acts injectively on $H^1(C_{s=0}, \cO_{C_{s=0}})$.  Frobenius acts injectively on $H^1(N, \cO_N)$ as $N$ is Frobenius split.  The $i = 1$ and $i = 2$ cases follow.  
\end{proof}

Now we handle the line bundle $\sL_{s=0} := \sL|_{s=0}$.  
For each $n > 0$ we have a short exact sequence $0 \to \sL^n \xrightarrow{\cdot s} \sL^n \to i_* \sL_{s=0}^n \to 0$.  By applying \autoref{lem.CohomologyOfX} we see that 
\begin{equation}
    \label{eq.HigherCohomologySpecialFiberIsZero}
    H^i(X_{s=0}, \sL_{s=0}^n) = 0 \;\;\;\; \text{ for all $i > 0$ and $n > 0$}.
\end{equation}

Next we prove a lemma that will be applied to both $C_{s=0}$ and to the nodal rational curve $N$ over $k$.  
\begin{lemma}
    \label{lem.FrobeniusInjectiveOnAllGluingsNegativeTwists}
    With notation as above, let $E = k$ or $E = K$.  Fix $P_E, Q_E \subseteq P^1_k$ disjoint closed points both of whose residue fields are isomorphic to $E$ and set $Z_E = \Spec E$.  Suppose the scheme $V$ is the pushout (as in \autoref{subsec.FerrandGluing}) of the diagram below
    \[
        \xymatrix{
            P_E \coprod Q_E \ar[d]_{g} \ar[r] & \bP^1_k \ar[d]^{\nu} \\
            Z_E \ar@{^{(}->}[r]_j & V
        }
    \]
    where the induced $P_E, Q_E \to Z_E$ are isomorpism identity.  We view $Z_E$ as a closed subscheme of $V$.  Suppose $\sQ$ is a line bundle on $V$ with $\nu^* \sQ \cong \cO_{\bP^1_k}(d)$ for some $d > 0$.  Then, for every $m > 0$, Frobenius is injective on $H^1(V, \sQ^{-m})$.
\end{lemma}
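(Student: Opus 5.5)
Twist the gluing sequence \autoref{eq.GluingSequenceGeneral} for $V$ by $\sQ^{-m}$, as in \autoref{lem.LineBundleSequence}. This gives
\[
0 \to \sQ^{-m} \to \nu_* \cO_{\bP^1_k}(-dm) \to j_* \cO_{Z_E} \to 0,
\]
using $j^*\sQ \cong \cO_{Z_E}$, which holds since $Z_E$ is the spectrum of a field. Because $dm>0$, we have $H^0(\bP^1_k,\cO(-dm))=0$, so the long exact sequence gives
\[
0 \to E \xrightarrow{\partial} H^1(V,\sQ^{-m}) \to H^1(\bP^1_k,\cO_{\bP^1_k}(-dm)) \to 0 .
\]
Frobenius acts compatibly on this sequence: $\sQ^{-m}\to \sQ^{-pm}$ via $s\mapsto s^p$, and similarly on the other terms, exactly as in \autoref{eq.GluingSequenceFrobeniusCompatible}. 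The target of Frobenius on the right is $H^1(\bP^1_k,\cO(-pdm))$, and on the left it is $E\to E$, $e\mapsto e^p$. This requires choosing the trivializations of $\sQ^{-m}$ and $\sQ^{-pm}$ along $Z_E$ as $p$-th powers of one another, which I will do.

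The plan is a snake-lemma/diagram-chase argument. Let $\xi\in H^1(V,\sQ^{-m})$ with $F(\xi)=0$.

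\textbf{Step 1.} The image of $\xi$ in $H^1(\bP^1_k,\cO(-dm))$ is killed by Frobenius. Frobenius is injective there because $\bP^1_k$ is globally $F$-split. Concretely, apply \autoref{lem.GlobalVersionOfMa} with $Z=\bP^1_k$, or argue directly with Čech cocycles: Frobenius sends the monomial basis $x^{-i}y^{-j}$ to $x^{-pi}y^{-pj}$, which is injective. Hence $\xi=\partial(e)$ for some $e\in E$.

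\textbf{Step 2.} Then $0=F(\partial e)=\partial'(e^p)$, where $\partial'$ is the connecting map for $\sQ^{-pm}$. This map is injective by the same vanishing of $H^0(\bP^1_k,\cO(-pdm))$. So $e^p=0$, and since $E$ is a field (reduced), $e=0$ and $\xi=0$.

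The main obstacle is the compatibility bookkeeping in Step 2. I must verify that Frobenius on $\sQ^{-m}$ induces, on the $j_*\cO_{Z_E}$ term, the absolute Frobenius of $E$ twisted only by a unit. Restricted to $Z_E$, the map $\sQ^{-m}|_{Z_E}\to\sQ^{-pm}|_{Z_E}$ is $s\mapsto s^p$. Under the chosen trivializations it is $e\mapsto e^p$, up to multiplication by a unit of $E$, which does not affect injectivity. The identification of the twisted sequence with the given gluing data uses \autoref{lem.LineBundleSequence}. Note that only reducedness of $E$ is used, not surjectivity of Frobenius, so this works for $E=K$ as well as for $E=k$, even though $F(K)=k\subsetneq K$.
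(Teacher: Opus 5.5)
Your proposal is correct and follows the paper's proof. Both twist the gluing sequence by $\sQ^{-m}$ to get the Frobenius-compatible sequence $0 \to E \to H^1(V,\sQ^{-m}) \to H^1(\bP^1_k,\cO_{\bP^1_k}(-dm)) \to 0$ and conclude from injectivity of Frobenius on the outer terms; your two-step chase is just the four lemma the paper invokes, with the trivialization bookkeeping on $Z_E$ made explicit.
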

\begin{proof}
    As we saw in \autoref{eq.GluingSequenceFrobeniusCompatible}, Frobenius acts compatibly on $0 \to \cO_V \to \nu_* \cO_{\bP^1_k} \xrightarrow{d_g} j_* \cO_{Z_E} \to 0$ and twist that sequence by $\sQ^{-m}$.  After choosing an $E$-basis for $H^0(Z_E, j^* \sQ^{-m}) \cong E$ (recalling that $Z_E = \Spec E$), we have a short exact sequence
    \[
        0 \to E \to H^1(V, \sQ^{-m}) \to H^1(\bP^1_k, \cO_{\bP^1_k}(-dm)) \to 0.
    \]
    As Frobenius acts injectively on both the left and the right\footnote{since $\bP^1_k$ is globally Frobenius split}, the four lemma implies it acts injectively on the middle, proving the lemma.
\end{proof}

\begin{remark}
    If $V = N$, then it is well known that $N$ is Frobenius split\footnote{Note $\bP^1_k$ has a Frobenius splitting that compatibly splits the two $k$-rational points $0$ and $\infty$ (with the same induced Frobenius splittings of the residue fields at those points).  It is then not difficult to show that this splitting descends to a splitting on the nodal curve where $0$ and $\infty$ are glued together. }, and we also see directly that Frobenius is injective on $H^1(V, \sQ^{-m})$.  However, if $V = C_{s=0}$, then $V$ is not Frobenius split.  We saw this in \autoref{rem.C0NotFSplit} but it also follows from the fact that $K \cong H^1(C_{s=0}, \cO_{C_{s=0}})$, compatibly with Frobenius, and \autoref{lem.GlobalVersionOfMa} since the $k$-basis $1, \beta$ for $K \cong H^1(C_{s=0}, \cO_{C_{s=0}})$ are sent to $1, a$, which are linearly dependent over $k = \bF_2(a)$.
\end{remark}

For any integer $n$ we have a short exact sequence:
\[
    0 \to \sL^n \to \nu'_* \sM^n \boxtimes \cO_{\bP^1_D}(t'n) \xrightarrow{d_n} j'_* \sM^n \to 0
\]
which itself is obtained by twisting $0 \to \cO_X \to \nu'_* \cO_{Y'} \xrightarrow{d_{g'}} j'_* \cO_C \to 0$ by $\sL^n$, see \autoref{lem.LineBundleSequence}.  Again, as all these sheaves are torsion-free and so flat over $D$, we see that $\sL_{s=0}$ fits into 
\[
    0 \to \sL_{s=0}^n \to \nu'_* \sM_{s=0}^n \boxtimes \cO_{\bP^1_k}(t'n) \xrightarrow{\overline{d_n}} j'_* \sM_{s=0}^n \to 0.
\]
But on the closed fiber where the automorphism $\sigma$ is trivial, the map $\overline{d_{n}}$ is just the difference of the two sections under the natural identifications.  Using that $X_{s=0} = C_{s=0} \times_k N$ and our construction of $\sL$ (noting that $\sigma$ becomes trivial after setting $s = 0$) we then see that 
\[
    \sL_{s=0}^n = \sM_{s=0}^n \boxtimes \sP^{n}
\]
where $\sP$ is a line bundle on $N$ obtained by some choice of isomorphism $\cO_{\bP^1_k}(t')|_0 \cong \cO_{\bP^1_k}(t')|_{\infty}$. 
 We can be more precise about this gluing by keeping track of homogeneous coordinates, but what really matters is that $\sP$ is ample as it pulls back to an ample line bundle $\cO_{P^1_k}(t')$ on $\bP^1_k$.

\begin{lemma}
    \label{lem.FrobeniusActsInjectivelyNegativeTwistsSpecialFiber}
    Suppose $n < 0$.  Then $H^1(X_{s=0}, \sL_{s=0}^n) = 0$ and 
    \[
        H^2(X_{s=0}, \sL_{s=0}^n) = H^1(C_{s=0}, \sM_{s=0}^n) \otimes_k H^1(N, \sP^{n}).
    \]
    Furthermore, Frobenius acts injectively on $H^2(X_{s=0}, \sL_{s=0}^n)$.
\end{lemma}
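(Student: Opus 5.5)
We use the product decomposition $\sL_{s=0}^n = \sM_{s=0}^n \boxtimes \sP^n$ on $X_{s=0} = C_{s=0} \times_k N$ and the K\"unneth formula over the field $k$:
\[
H^i(X_{s=0}, \sL_{s=0}^n) \cong \bigoplus_{p+q=i} H^p(C_{s=0}, \sM_{s=0}^n) \otimes_k H^q(N, \sP^n).
\]
Write $m = -n > 0$. The first step is to check that $H^0(C_{s=0}, \sM_{s=0}^{-m}) = 0$ and $H^0(N, \sP^{-m}) = 0$. Both $C_{s=0}$ and $N$ are reduced. Their normalizations are $\bP^1_k$, and the line bundles pull back to $\cO_{\bP^1_k}(-tm)$ and $\cO_{\bP^1_k}(-t'm)$, which have no nonzero global sections. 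A global section on $C_{s=0}$ or $N$ pulls back injectively to the normalization, because it is the kernel term in the twisted gluing sequence of \autoref{lem.LineBundleSequence}. So these $H^0$'s vanish. Both curves are one-dimensional, so only $H^0$ and $H^1$ can be nonzero. It follows that the $i=1$ summands $H^0\otimes H^1$ and $H^1 \otimes H^0$ vanish, hence $H^1(X_{s=0}, \sL_{s=0}^n) = 0$. The only surviving term in degree $2$ is $H^1(C_{s=0}, \sM_{s=0}^n) \otimes_k H^1(N, \sP^n)$, which gives the stated formula.

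For Frobenius, the K\"unneth isomorphism is given by the cup product of pullbacks along the two projections, and it is compatible with Frobenius: the absolute Frobenius of $X_{s=0}$ acts on an external product $u \boxtimes w$ as $F(u) \boxtimes F(w)$. We then apply \autoref{lem.FrobeniusInjectiveOnAllGluingsNegativeTwists} twice, to get injectivity of Frobenius on $H^1(C_{s=0}, \sM_{s=0}^{-m})$ (case $E = K$) and on $H^1(N, \sP^{-m})$ (case $E = k$). Here $\nu^*\sM_{s=0} \cong \cO(t)$ and $\nu^* \sP \cong \cO(t')$, with $t,t'>0$.

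The main obstacle is that the Frobenius on the tensor product is only $p$-semilinear over $k$, and $k$ is imperfect. Injectivity of $F_1$ and $F_2$ does not by itself give injectivity of $F_1 \otimes F_2$: the latter is the linearization $V_1^{(p)}\otimes V_2^{(p)}$-type map $k\otimes_{k^p}(\ldots)$, and linear independence can fail, as the remark after \autoref{lem.FrobeniusInjectiveOnAllGluingsNegativeTwists} illustrates. The fix is to use the fact that $N$ is Frobenius split. We choose a $k$-basis $w_1,\dots,w_r$ of $H^1(N,\sP^{-m})$. By \autoref{lem.GlobalVersionOfMa}, applied to the globally $F$-split $N$ with $H^0(N,\cO_N)=k$, the images $F(w_1),\dots,F(w_r)$ are $k$-linearly independent in $H^1(N, \sP^{-2m})$.

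Now write an element of the tensor product as $\sum_i u_i \otimes w_i$ with $u_i \in H^1(C_{s=0},\sM_{s=0}^{-m})$, and suppose it maps to $\sum_i F(u_i)\otimes F(w_i) = 0$. Since the $F(w_i)$ are $k$-linearly independent, each $F(u_i) = 0$. Injectivity of Frobenius on $H^1(C_{s=0}, \sM_{s=0}^{-m})$ then forces $u_i = 0$, so Frobenius is injective on $H^2(X_{s=0}, \sL_{s=0}^n)$.
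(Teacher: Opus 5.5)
Your proposal is correct and follows essentially the same route as the paper. Both use K\"unneth with the vanishing $H^0$'s, then a $k$-basis of $H^1(N,\sP^n)$ whose Frobenius images are $k$-linearly independent by \autoref{lem.GlobalVersionOfMa}, combined with Frobenius-injectivity on $H^1(C_{s=0},\sM^n_{s=0})$ from \autoref{lem.FrobeniusInjectiveOnAllGluingsNegativeTwists}. The only cosmetic difference is that you conclude $F(u_i)=0$ directly from linear independence of the second tensor factors, whereas the paper argues by contradiction via a $k$-linear projection $\pi$.
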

\begin{proof}
    Since $H^0(C_{s=0}, \sM_{s=0}^n) = 0$ and $H^0(N, \sP^{n}) = 0$, we see from the K\"unneth formula the vanishing and the description of $H^2(X_{s=0}, \sL_{s=0}^n)$.  Thus we need to analyze the action of Frobenius on the latter.  Choose a $k$-basis $a_1, \dots, a_r \in H^1(N, \sP^{n})$ and consider an arbitrary nonzero element $\sum_{i = 1}^r c_i \otimes a_i \in H^1(C_{s=0}, \sM_{s=0}^n) \otimes_k H^1(N, \sP^{n})$.  If Frobenius sends this element to zero then 
    \[
        \sum_{i = 1}^r c_i^p \otimes a_i^p = 0.
    \]
    Since some $c_i \neq 0$, we also have that $c_i^p \neq 0$ by \autoref{lem.FrobeniusInjectiveOnAllGluingsNegativeTwists}.  Now, pick a $k$-linear projection $\pi : H^1(C_{s=0}, \sM_{s=0}^{pn}) \to k$ that sends $c_i^p$ to a nonzero element.  Applying this to our relation, we obtain 
    \[
        \sum_{i=1}^r \pi(c_i^p)  a_i^p = 0
    \]
    contradicting \autoref{lem.GlobalVersionOfMa}.
\end{proof}

\section{The main theorem}

The goal in this section is to put the pieces together.
Recall from \autoref{subsec.TheSectionRing} that $A = \bigoplus_{n \geq 0} H^0(X, \sL^n)$.  As $X$ is a $3$-dimensional integral scheme, $A$ is a $4$-dimensional integral domain.  Set $\fram = (s) + A_{> 0}$ to be the homogeneous maximal ideal.  
First we note that $A/sA$ is the section ring of $X_{s=0}$.

\begin{lemma}
    \label{lem.AmodSisSectionRing}
    We have an isomorphism of rings:
    \[
        A/sA \cong \bigoplus_{n \geq 0} H^0(X_{s=0}, \sL^n_{s=0})
    \]
    where $\sL^n_{s=0} = \sL^n|_{X_{s=0}}$.  
    As a consequence, $A/sA$ is an integral domain.
\end{lemma}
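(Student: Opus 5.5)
The plan is to apply the short exact sequence of sheaves $0 \to \sL^n \xrightarrow{\cdot s} \sL^n \to i_* \sL^n_{s=0} \to 0$ on $X$ for each $n \geq 0$, where $i : X_{s=0} \hookrightarrow X$ is the closed fiber. This sequence is exact on the left because $\sL^n$ is a line bundle on $X$ and $\cO_X$ is torsion free over $D$, being a subsheaf of $\nu'_*\cO_{Y'}$, which is flat over $D$. Taking global sections gives
\[
    0 \to H^0(X, \sL^n) \xrightarrow{\cdot s} H^0(X, \sL^n) \to H^0(X_{s=0}, \sL^n_{s=0}) \to H^1(X, \sL^n).
\]
For $n > 0$, \autoref{lem.CohomologyOfX} gives $H^1(X, \sL^n) = 0$, so $A_n/sA_n \cong H^0(X_{s=0}, \sL^n_{s=0})$.

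For $n = 0$ the same sequence reads $0 \to D \xrightarrow{\cdot s} D \to H^0(X_{s=0}, \cO_{X_{s=0}}) \to H^1(X, \cO_X) \xrightarrow{\cdot s} H^1(X,\cO_X)$. By \autoref{lem.DescriptionAndFrobeniusActionOnCohomologyOfX}, $H^1(X, \cO_X) \cong D^2$ is $s$-torsion free, so the connecting map is zero and $A_0/sA_0 = D/sD = k$. This agrees with $H^0(X_{s=0}, \cO_{X_{s=0}}) = k$ from \eqref{eq.CohomologyOfX0}. Since the restriction maps $H^0(X, \sL^n) \to H^0(X_{s=0}, \sL^n_{s=0})$ are compatible with multiplication of sections, assembling the degrees gives a surjective graded ring homomorphism $A \to \bigoplus_{n\ge 0} H^0(X_{s=0}, \sL^n_{s=0})$ whose kernel is $sA$. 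This is the claimed isomorphism.

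For the domain statement, it suffices to show that $X_{s=0}$ is integral, because a section ring of a line bundle on an integral scheme is a domain: a product of nonzero sections is nonzero at the generic point. By \autoref{subsubsec.ClosedFiberX0}, $X_{s=0} \cong C_{s=0} \times_k N$. Here $N$ is a nodal rational curve over $k$, which is geometrically integral. The scheme $C_{s=0}$ is integral, since by \eqref{eq.GluingSequenceForCs=0} its structure sheaf is a subsheaf of $\nu_*\cO_{\bP^1_k}$. A product of an integral $k$-scheme with a geometrically integral $k$-scheme is integral, so $X_{s=0}$ is integral.

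The only subtle point, and the one I expect to need care, is that $C_{s=0}$ is not geometrically reduced over $k$, because the residue field $K$ is inseparable. The argument must therefore rely on the geometric integrality of $N$, not of $C_{s=0}$. To check this directly, I would note that $N$ is covered by affine opens whose coordinate rings are subrings of $k[t]$ and of $k[t^{-1}]$. Tensoring these with the integral rings of $C_{s=0}$ therefore gives subrings of polynomial rings over domains, and these are domains.
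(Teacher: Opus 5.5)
Your proof is correct and follows essentially the same route as the paper: use $0 \to \sL^n \xrightarrow{\cdot s} \sL^n \to i_*\sL^n_{s=0} \to 0$ together with $H^1(X,\sL^n)=0$ for $n>0$, and treat $n=0$ separately. For $n=0$ the paper just observes that the restriction $D = H^0(X,\cO_X) \to H^0(X_{s=0},\cO_{X_{s=0}}) = k$ is the quotient map, whereas you use that $H^1(X,\cO_X)\cong D^2$ is $s$-torsion-free; both are valid. Your argument that $A/sA$ is a domain, via integrality of $X_{s=0}\cong C_{s=0}\times_k N$ (integral times geometrically integral), supplies what the paper leaves implicit. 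In your optional direct check, the charts of $N$ through the node embed in polynomial rings such as $k[(t-1)^{-1}]$, not in the standard $k[t]$ or $k[t^{-1}]$, but the general fact you cite already suffices.
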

\begin{proof}    
    We begin with the short exact sequence $0 \to \cO_X \xrightarrow{\cdot s} \cO_X \to \cO_{X_{s=0}} \to 0$.  
    After twisting by $\sL^n$, it suffices to show that $H^0(X, \sL^n) \to H^0(X_{s=0}, \sL^n|_{s=0})$ surjects for all $n \geq 0$.
    We begin with $n = 0$.  Simply note that $H^0(X, \cO_X) = D$ by \autoref{lem.CohomologyOfX} and $H^0(X_{s=0}, \cO_{X_{s=0}}) = k \cong D/sD$ by \autoref{eq.CohomologyOfX0}.  For $n > 0$, $H^1(X, \sL^n) = 0$ by \autoref{lem.CohomologyOfX} and so the result follows.
\end{proof}

Now we can prove that $A/sA$ is $F$-injective.

\begin{proposition}
    \label{prop.SpecialFiberIsFInjective}
    $A/sA$ is $F$-injective.  
\end{proposition}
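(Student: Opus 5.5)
The approach is to reduce $F$-injectivity of $S := A/sA$ to the injectivity of Frobenius on sheaf cohomology of line bundles on $X_{s=0}$, using the standard comparison between local cohomology of a section ring and sheaf cohomology of its Proj. By \autoref{lem.AmodSisSectionRing}, $S = \bigoplus_{n\ge 0} H^0(X_{s=0}, \sL^n_{s=0})$ with $\sL_{s=0}$ ample, and $X_{s=0}$ is a reduced connected projective $3$-dimensional $k$-scheme, so $\dim S = 3$. Let $\fram_S = S_{>0}$ be the irrelevant ideal (localizing at it does not affect $F$-injectivity). For $i \ge 2$ we have graded isomorphisms
\[
H^i_{\fram_S}(S) \cong \bigoplus_{n\in\bZ} H^{i-1}(X_{s=0}, \sL^n_{s=0}).
\]
For $i = 0,1$, $H^0_{\fram_S}(S)=0$ because $S$ is a domain, and there is an exact sequence $0 \to H^0_{\fram_S}(S) \to S \to \bigoplus_n H^0(X_{s=0},\sL^n_{s=0}) \to H^1_{\fram_S}(S)\to 0$. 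Since $S$ is the full section ring, the middle map is an isomorphism in degrees $n\ge 0$; in degrees $n<0$ the target vanishes because $\sL_{s=0}$ is ample and $X_{s=0}$ is reduced and connected. Hence $H^1_{\fram_S}(S)=0$. Under these identifications, Frobenius on $H^i_{\fram_S}(S)$ sends the degree-$n$ piece to the degree-$pn$ piece and agrees with the Frobenius map $H^{i-1}(X_{s=0},\sL^n_{s=0}) \to H^{i-1}(X_{s=0},\sL^{pn}_{s=0})$ induced by $\cO \to F_*\cO$ twisted by $\sL^n$. This is standard, since the Čech description is compatible with Frobenius. Because Frobenius respects the grading by multiplying degrees by $p$, injectivity can be checked one degree $n$ at a time.

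The proof then splits by the sign of $n$. For $n>0$, \autoref{eq.HigherCohomologySpecialFiberIsZero} gives $H^j(X_{s=0},\sL^n_{s=0})=0$ for $j>0$, so there is nothing to check. For $n=0$, \autoref{lem.FrobeniusInjectiveOnStructureSheafOfSpecialFiber} gives injectivity on $H^1$ and $H^2$ of $\cO_{X_{s=0}}$. For $n<0$, \autoref{lem.FrobeniusActsInjectivelyNegativeTwistsSpecialFiber} shows $H^1(X_{s=0},\sL^n_{s=0})=0$ and that Frobenius is injective on $H^2$. The top cohomology $H^3$ vanishes because $X_{s=0}\cong C_{s=0}\times_k N$ is a product of curves, so Künneth leaves no contribution in degree $3$. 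Equivalently, $H^4_{\fram_S}(S)=0$ since $\dim S = 3$. Together these cover every degree of every $H^i_{\fram_S}(S)$.

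The main obstacle is making the identification of the Frobenius actions precise. One must check that the absolute Frobenius on $H^i_{\fram_S}(S)$ corresponds to the sheaf-level maps $H^{i-1}(\sL^n)\to H^{i-1}(\sL^{pn})$, $\eta\mapsto\eta^p$, which are the maps appearing in the cited lemmas. One must also verify $H^1_{\fram_S}(S)=0$, which is where the section-ring hypothesis from \autoref{lem.AmodSisSectionRing} enters. I would handle both through the Čech complex on the cover by $D_+(f)$ for homogeneous $f$, where $S_f$ in degree $n$ is $H^0(D_+(f),\sL^n)$ and Frobenius is literally $p$-th powering.
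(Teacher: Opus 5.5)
Your proposal is correct and follows essentially the same route as the paper. Both reduce to the irrelevant ideal and show $H^0_{\mathfrak{m}_S}(S)=H^1_{\mathfrak{m}_S}(S)=0$; the paper cites the depth $\geq 2$ property of section rings, while you derive it directly. Both then identify $[H^i_{\mathfrak{m}_S}(S)]_n$ with $H^{i-1}(X_{s=0},\mathcal{L}^n_{s=0})$ compatibly with Frobenius and split into $n>0$, $n=0$, $n<0$ using the same three results. One small slip: $X_{s=0}\cong C_{s=0}\times_k N$ is $2$-dimensional, not $3$-dimensional, which is exactly why $\dim S=3$, as you correctly use later.
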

\begin{proof}
    By \autoref{lem.AmodSisSectionRing}, $A/sA$ is a section ring of an integral projective scheme $X_{s=0}$  over a field with respect to an ample line bundle.  As it is $\bN$-graded over an $F$-finite field, it suffices to check that $A/sA$ is $F$-injective at the irrelevant ideal $\frn = \fram / (s)$ (see for instance \cite[Chapter III, Proposition 5.11]{SchwedeSmith.FBook}).  As $X_{s=0}$ is an integral scheme of dimension 2, $A/sA$ is a domain of dimension 3.  Furthermore, as $A/sA$ is a section ring associated to an ample line bundle on a positive-dimensional projective variety over a field, it has depth $\geq 2$ at the irrelevant ideal (see for instance \cite[Proposition 2.1(1)]{HyrySmithCoreVersusGradedCore}).  It follows that $H^0_{\frn}(A/sA) = H^1_{\frn}(A/sA) = 0$.  Thus we must show that Frobenius injects on $H^2_{\frn}(A/sA)$ and $H^3_{\frn}(A/sA)$.  By the connection between local cohomology and sheaf cohomology, for $i \geq 2$, 
    \[
        [H^i_{\frn}(A/sA)]_n = H^{i-1}(X_{s=0}, \sL^n_{s=0})
    \]
    and this isomorphism is compatible with Frobenius.  Thus we must show that Frobenius acts injectively on $H^{1}(X_{s=0}, \sL^n_{s=0})$ and on $H^2(X_{s=0}, \sL^n_{s=0})$ for all $n \in \bZ$.  
    
    For $n > 0$, these cohomologies are zero by \autoref{eq.HigherCohomologySpecialFiberIsZero}.  For $n = 0$, Frobenius acts injectively by \autoref{lem.FrobeniusInjectiveOnStructureSheafOfSpecialFiber}.  For $n < 0$, Frobenius acts injectively by \autoref{lem.FrobeniusActsInjectivelyNegativeTwistsSpecialFiber}.  The result is proven.
\end{proof}

\begin{theorem}
    \label{thm.AIsNotFInjective}
    $A$ is not $F$-injective.  In fact, Frobenius annihilates a class in $H^4_{\fram}(A)$.  
\end{theorem}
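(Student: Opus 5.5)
The plan is to compute the degree-zero part of $H^4_{\fram}(A)$ in terms of $H^2(X, \cO_X)$, and then read off a Frobenius-killed class from the explicit action in \autoref{lem.DescriptionAndFrobeniusActionOnCohomologyOfX}. Write $A_+ = A_{>0}$, so that $\fram = (s) + A_+$. Local cohomology with respect to a sum of ideals is computed by composing the two torsion functors $\Gamma_{(s)} \circ \Gamma_{A_+}$. This gives a Grothendieck spectral sequence
\[
E_2^{p,q} = H^p_{(s)}\big(H^q_{A_+}(A)\big) \Rightarrow H^{p+q}_{\fram}(A).
\]
It is compatible with Frobenius, since Frobenius acts naturally on the Čech complexes computing everything; this can be seen, for example, using a Čech complex on $s, f_1, \dots, f_r$ with $A_+ = \sqrt{(f_1, \dots, f_r)}$. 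Since $(s)$ is generated by one element, $E_2^{p,q} = 0$ for $p \geq 2$. The spectral sequence therefore collapses into short exact sequences
\[
0 \to H^1_{(s)}\big(H^3_{A_+}(A)\big) \to H^4_{\fram}(A) \to H^0_{(s)}\big(H^4_{A_+}(A)\big) \to 0 .
\]
These are Frobenius compatible, and concretely the first map is the connecting/Čech map relating $H^1_{(s)}$ to the $s$-inverting step.

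Next I identify the graded pieces. For $i \geq 2$ we have $[H^i_{A_+}(A)]_n = H^{i-1}(X, \sL^n)$, compatibly with Frobenius, exactly as in the proof of \autoref{prop.SpecialFiberIsFInjective} but now over $D$. Since $X \to \Spec D$ is projective with $2$-dimensional fibers, $H^3(X, \sL^n) = 0$, so $H^4_{A_+}(A) = 0$. It follows that $H^4_{\fram}(A) \cong H^1_{(s)}(H^3_{A_+}(A))$, and in degree $0$ this gives
\[
[H^4_{\fram}(A)]_0 \cong H^1_{(s)}\big(H^2(X, \cO_X)\big) = H^2(X,\cO_X) \otimes_D D_s/D .
\]
This uses that $H^1_{(s)}(M) = M_s/\mathrm{im}(M)$ for a $D$-module $M$, and that Frobenius sends $m/s^k$ to $F(m)/s^{2k}$. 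By \autoref{lem.DescriptionAndFrobeniusActionOnCohomologyOfX}, $H^2(X,\cO_X) = (D/s^2D)[1] \oplus D[\beta]$. The torsion summand dies after inverting $s$, so $[H^4_{\fram}(A)]_0 \cong (D_s/D)\cdot[\beta]$.

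Now take the class $\eta = [\beta]/s \neq 0$ in $(D_s/D)[\beta]$. By \autoref{eq.FrobeniusActionOnCokerIdPlusIota},
\[
F(\eta) = \big(a[1] + s^2[\beta]\big)/s^2 .
\]
The term $a[1]/s^2$ is zero in the localization because $[1]$ is $s^2$-torsion. The remaining term is $s^2[\beta]/s^2 = [\beta]/1$, which is zero in $D_s/D$. Hence $F(\eta) = 0$ while $\eta \neq 0$, so Frobenius is not injective on $H^4_{\fram}(A)$. The same conclusion holds after localizing at $\fram$, because $H^4_{\fram}(A) = H^4_{\fram A_\fram}(A_\fram)$.

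The main obstacle is justifying that the identification $[H^4_{\fram}(A)]_0 \cong H^1_{(s)}(H^2(X,\cO_X))$ intertwines the Frobenius actions. I plan to handle this with the explicit Čech description rather than through abstract spectral-sequence functoriality. Using the Čech complex on $s, f_1,\dots,f_r$, the top local cohomology $H^4_{\fram}(A)$ is the cokernel of the last Čech differential. This equals $\mathrm{coker}\big(H^3_{A_+}(A) \to H^3_{A_+}(A)_s\big)$, and on that cokernel Frobenius visibly acts by raising Čech cocycles to the $p$-th power. Everything else is a direct consequence of the earlier lemmas.
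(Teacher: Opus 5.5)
Your proposal is correct and follows the paper's proof essentially verbatim. Both use the spectral sequence for $\Gamma_{(s)}\circ\Gamma_{A_{>0}}$, the injection $H^1_{(s)}(H^3_{A_{>0}}(A)) \hookrightarrow H^4_{\fram}(A)$, the identification $[H^3_{A_{>0}}(A)]_0 \cong H^2(X,\cO_X) \cong (D/s^2D)[1]\oplus D[\beta]$, and the class $[\beta]/s$ killed by Frobenius. Your extra observations are harmless refinements the paper leaves implicit: that $H^4_{A_{>0}}(A)=0$, and the \v{C}ech justification of Frobenius compatibility (which works cleanly if you take three homogeneous generators of $A_{>0}$ up to radical, or use the mapping-cone long exact sequence).
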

\begin{proof}
    We shall construct a class in $H^4_{\fram}(A)$ that is annihilated by Frobenius.  First note that we have an isomorphism
    \[
        [H^3_{A_{>0}}(A)]_0 \cong H^2(X, \cO_X) \cong \coker(\id + \iota) = ((D/s^2 D) \cdot [1]) \bigoplus (D \cdot [\beta])
    \]
    by \autoref{lem.DescriptionAndFrobeniusActionOnCohomologyOfX} which is compatible with Frobenius ($F([1]) = [1]$ and $F([\beta]) = a[1] + s^2 [\beta]$ as explained in \autoref{eq.FrobeniusActionOnCokerIdPlusIota}).  

    From $\myR\Gamma_{(s)} \circ \myR\Gamma_{A_{>0}} \cong \myR\Gamma_{\fram}$ we have the spectral sequence
    \[
        E_2^{u,v} = H^u_{(s)}(H^v_{A_{>0}}(A)) \Rightarrow H^{u+v}_{\fram}(A).
    \]
    For $u+v = 4$, the only possible nonzero terms are $E_2^{0,4} = H^0_{(s)}(H^4_{A_{>0}}(A))$ and $E_2^{1,3} = H^1_{(s)}(H^3_{A_{>0}}(A))$.  Furthermore the differentials into and out of those terms are zero on the $E_2$ and every subsequent page (there are only two nonzero columns).  From the induced short exact sequence $0 \to H^1_{(s)}(H^3_{A_{>0}}(A)) \to H^4_{\fram}(A) \to H^0_{(s)}(H^4_{\fram}(A)) \to 0$ we see it suffices to prove that Frobenius is not injective on 
    \[
        [H^1_{(s)}\big(H^3_{A_{>0}}(A)\big)]_0 \cong H^1_{(s)}\big((D/s^2 D) \cdot [1] \oplus D \cdot [\beta] \big).
    \]
    The right side is the cokernel of the map
    \begin{equation}
        \label{eq.DescriptionOfH4mAsCokernel}
        ((D/s^2 D) \cdot [1]) \bigoplus (D \cdot [\beta]) \to \Big( ((D/s^2 D) \cdot [1]) \bigoplus (D \cdot [\beta]) \Big)_s \cong (0 \cdot[1])_s \bigoplus (D\cdot[\beta])_s
    \end{equation}
    where $(-)_s$ denotes localization at $s$.  Thus consider the nonzero class in the cokernel:
    \begin{equation}
        \left[ \frac{0 \oplus [\beta]}{s} \right]\label{eq:Cech-class-in-kernel}
    \end{equation}
        
    Frobenius sends this class to 
    \[
        \left[\frac{\overline{a}[1] \oplus s^2 [\beta]} {s^2} \right] = \left[\frac{0[1] \oplus s^2 [\beta]}{s^2}\right] = \left[\frac{0[1] \oplus [\beta]}{1}\right].
    \]
    But that comes from the image of \autoref{eq.DescriptionOfH4mAsCokernel} and hence it is zero.  This completes the proof.    
\end{proof}

As a consequence, we immediately obtain the local version as well.
\begin{corollary}
    \label{cor.Main}
    Set $R = A_{\fram}$ so that $R$ is an $F$-finite 4-dimensional domain essentially of finite type over $D = \bF_2(a)[s]_{(s)}$.  Then $R/sR$ is $F$-injective while $R$ is not.
\end{corollary}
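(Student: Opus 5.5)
We will deduce the corollary from \autoref{prop.SpecialFiberIsFInjective} and \autoref{thm.AIsNotFInjective}. The one input they do not supply is that local cohomology supported at $\fram$ is unchanged by localizing at $\fram$.

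\textbf{Basic properties of $R$.} $A$ is a finitely generated graded $D$-algebra, because it is the section ring of the ample line bundle $\sL$ on $X$, which is projective over $D$. Since $D$ is a localization of $\bF_2(a)[s]$, the ring $R = A_\fram$ is essentially of finite type over $\bF_2(a)$. It is therefore $F$-finite, because $\bF_2(a)$ is $F$-finite. $R$ is a domain because $A$ is a domain, being the section ring of the integral scheme $X$. It has dimension $4$ because $\fram$ is the homogeneous maximal ideal of the $4$-dimensional graded domain $A$, whose degree-zero part $A_0 = D$ is local; hence $\fram$ has height $\dim A$. Finally, $s$ is a nonzerodivisor on $R$ because $R$ is a domain and $s \in \fram$ is nonzero.

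\textbf{$R$ is not $F$-injective.} $\fram$ is a maximal ideal of $A$, so $H^i_\fram(A)$ is $\fram$-torsion, and localization gives
\[
H^i_\fram(A) \cong H^i_\fram(A)\otimes_A A_\fram \cong H^i_{\fram R}(R).
\]
This isomorphism is compatible with Frobenius, since Frobenius commutes with localization. By \autoref{thm.AIsNotFInjective}, some nonzero class in $H^4_\fram(A)$ is killed by Frobenius. Its image in $H^4_{\fram R}(R)$ is therefore a nonzero class killed by Frobenius, so $R$ is not $F$-injective.

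\textbf{$R/sR$ is $F$-injective.} We have $R/sR \cong (A/sA)_{\frn}$ with $\frn = \fram/(s)$. By the same localization argument, $H^i_{\frn}(A/sA) \cong H^i_{\frn}((A/sA)_\frn)$ compatibly with Frobenius. The proof of \autoref{prop.SpecialFiberIsFInjective} shows that Frobenius is injective on every $H^i_{\frn}(A/sA)$, so $R/sR$ is $F$-injective.

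The only step requiring any care is the dimension and height claim for $\fram$, which follows from standard facts about graded rings over a local base. All the substance lies in the two cited results.
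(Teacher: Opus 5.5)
Your proof is correct and follows the paper's route. The paper's proof also rests on the Frobenius-compatible isomorphisms $H^i_{\mathfrak{m}}(A)\cong H^i_{\mathfrak{m}R}(R)$ and $H^i_{\mathfrak{m}/(s)}(A/sA)\cong H^i_{\mathfrak{m}R/sR}(R/sR)$, which hold because these modules are supported at the homogeneous maximal ideal; it then cites \autoref{thm.AIsNotFInjective} and \autoref{prop.SpecialFiberIsFInjective}. Your extra checks (finite generation, $F$-finiteness, $\operatorname{ht}\mathfrak{m}=\dim A=4$, $s$ a nonzerodivisor) are all sound, and the paper simply takes them for granted.
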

\begin{proof}
    There are natural isomorphisms $H^i_{\fram R}(R) \cong H^i_{\fram}(A)$ and $H^i_{\fram R/(s R)}(R/sR) \cong H^i_{\fram/(s)}(A/sA)$ for all $i$, see for instance \cite[Corollary 4.3.3]{BrodmannSharpLocalCohomology} (or one can deduce it from \cite[\href{https://stacks.math.columbia.edu/tag/0ALZ}{Tag 0ALZ}]{stacks-project}) noting that $H^i_{\fram}(A)$ and $H^i_{\fram/(s)}(A/sA)$ are supported at the homogeneous maximal ideal.  
\end{proof}

\section{Corollaries}\label{sec:corollaries}
We now turn to two related questions that our example resolves. The first concerns Frobenius-stable secondary representations of the local cohomology modules of a local ring. We recall the following definition from \cite[Section 7.2]{BrodmannSharpLocalCohomology} and \cite{DM22}:
\begin{definition}
    Let $(R,\m,k)$ be a $d$-dimensional local ring of prime characteristic $p>0$, and let $M$ be an $R$-module.
    \begin{enumerate}[label=(\roman*)]
        \item Say that $M$ is \emph{secondary} if $M\neq 0$ and for every element $x\in R$, the multiplication-by-$x$ map $M\to M$ is either surjective or nilpotent.
        \item A \emph{secondary representation} of $M$ is a decomposition $M=\sum\limits_{j=1}^n M_j$ where each $M_j\subseteq M$ is a secondary submodule.
        \item For each $0\leq i\leq d$, we say that a secondary representation of the $i$-{th} local cohomology module $H^i_\m(R)=\sum\limits_{j=1}^n M_j$ is \emph{$F$-stable} if $F(M_j)\subseteq M_j$ for each $j$, where $F$ is the natural Frobenius action on $H^i_\m(R)$.
    \end{enumerate}
\end{definition}
It is known that $H^{\dim(R)}_\m(R)$ \emph{always} admits an $F$-stable secondary representation, but it was open whether this holds for the lower local cohomology modules as well; see \cite[Lemma 3.2 and Question 4.1]{DM22}. The motivation is that \cite[Theorem 3.4]{DM22} proves that if each local cohomology module $H^i_\m(R)$ admits a $F$-stable secondary representation and if $R/fR$ is $F$-injective for some nonzerodivisor $f\in \m$, then $R$ is $F$-injective. It thus follows immediately from \autoref{thm:main-thm} that $F$-stable secondary representations need not exist.
\begin{corollary}
    There exists a local $F$-finite domain $(R,\m)$ which has a local cohomology module $H^i_\m(R)$, none of whose secondary representations are $F$-stable.
\end{corollary}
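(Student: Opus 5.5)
The plan is to argue by contradiction, using the ring $R = A_{\fram}$ of \autoref{cor.Main} together with the nonzerodivisor $s \in \fram R$. First we record the hypotheses needed to apply \cite[Theorem 3.4]{DM22}. $R$ is a local $F$-finite domain: $A$ is a domain, finitely generated over the $F$-finite ring $D = \bF_2(a)[s]_{(s)}$, and localization preserves both properties. Since $R$ is a domain and $s \neq 0$, the element $s$ is a nonzerodivisor. Finally, $R/sR$ is $F$-injective by \autoref{cor.Main}.

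Next, suppose toward a contradiction that for every $0 \le i \le 4$ the module $H^i_{\fram R}(R)$ admits an $F$-stable secondary representation. Then \cite[Theorem 3.4]{DM22} applies and says $R$ is $F$-injective. This contradicts \autoref{cor.Main}, or more precisely \autoref{thm.AIsNotFInjective} transported to $R$ through the isomorphisms $H^i_{\fram R}(R)\cong H^i_\fram(A)$. So some $H^i_{\fram R}(R)$ admits no $F$-stable secondary representation, which means none of its secondary representations is $F$-stable.

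The only step needing care is that the hypotheses of \cite[Theorem 3.4]{DM22} match our setting: local ring, excellence or $F$-finiteness, and the secondary-representation hypothesis imposed for all $i$. $F$-finite rings are excellent, so this is routine.

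We can also say which $i$ is the culprit. By \cite[Lemma 3.2]{DM22}, the top module $H^4_{\fram R}(R)$ always has an $F$-stable secondary representation, so the failure must occur for some $i \le 3$. It is plausibly $H^3$, which carries the $(D/s^2D)\cdot[1]\oplus D\cdot[\beta]$ piece. Pinning this down is not needed for the corollary.
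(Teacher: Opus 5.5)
Your proposal is correct and follows the paper's own argument: combine \cite[Theorem 3.4]{DM22} with the facts that $R=A_{\fram}$ is not $F$-injective while $R/sR$ is (\autoref{cor.Main}), so some $H^i_{\fram}(R)$ has no $F$-stable secondary representation. Your further remark that the failing index must satisfy $i\le 3$, by \cite[Lemma 3.2]{DM22}, is a correct aside that the corollary does not require.
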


Another perspective that has been studied in relation to the deformation problem is that of $\m$-adic stability, which asks whether, given a local ring $(R,\m)$ and a nonzerodivisor $f\in\m$ such that $R/fR$ satisfies some property $\mathcal{P}$, there exists an integer $N\gg 0$ such that $R/(f+\epsilon)R$ also satisfies $\mathcal{P}$ for all $\epsilon\in \m^N$. This has been pursued in \cite{DM22,PS23} in the context of $F$-singularities, and it is proven in \cite[Theorem 2.4]{DS24} that $\m$-adic stability implies deformation for many local properties arising from geometric settings. Our counterexample to $F$-injective deformation thus also yields:

\begin{corollary}
    There exists a five-dimensional $F$-finite local domain $(R,\m)$ and a nonzerodivisor $f\in \m$ such that $R/fR$ is $F$-injective, but such that for every integer $N\geq 1$ there exists an element $\epsilon\in \m^N$ such that the ring $R/(f+\epsilon)R$ is not $F$-injective.
\end{corollary}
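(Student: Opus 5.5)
The plan is to take the four-dimensional ring $R' := A_{\fram}$ of \autoref{cor.Main}, adjoin one free variable, and use the deformation parameter $s$ as $f$. Set $R := R'[y]_{(\fram R', y)}$, which is a five-dimensional $F$-finite local domain since $R'$ is one. Set $f := s$, which is a nonzerodivisor in $R$ because $R$ is flat over $R'$. For each $N \geq 1$, take $\epsilon := y^N \in \m^N$. Since we are in characteristic two, $f + \epsilon = s + y^N = y^N - s$.

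First, I would show that $R/sR$ is $F$-injective. We have $R/sR \cong (R'/sR')[y]_{(\fram, y)}$, the localization of a polynomial extension of $R'/sR'$, which is $F$-injective by \autoref{cor.Main}. $F$-injectivity ascends along the flat map $R'/sR' \to (R'/sR')[y]$, whose closed fiber is a polynomial ring over a field and hence regular; this is the standard permanence result of \cite{DattaMurayamaPermanencePropertiesFinjectivity}. Concretely, $H^{i+1}_{(\frn,y)}((R'/sR')[y]) \cong H^i_{\frn}(R'/sR') \otimes_k H^1_{(y)}(k[y])$ compatibly with Frobenius, so injectivity on both factors gives injectivity on the tensor product. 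Localizing at the maximal ideal preserves this.

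Second, I would show that $S_N := R/(s + y^N)R$ is not $F$-injective. Since $y^N - s$ is monic in $y$, the ring $S_N$ is the localization of $R'[y]/(y^N - s)$, and that ring is a finite free $R'$-module with basis $1, y, \dots, y^{N-1}$. It is local: modulo $\fram R'$ it becomes $k[y]/(y^N)$, so its maximal ideal $\fram S_N + (y)$ has the same radical as $\fram S_N$. Hence, by independence of base for local cohomology, $H^4_{\fram_{S_N}}(S_N) \cong H^4_{\fram}(R') \otimes_{R'} S_N \cong \bigoplus_{i=0}^{N-1} H^4_{\fram}(R')\, y^i$, and Frobenius is compatible with the inclusion $R' \to S_N$. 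Now take the class $\eta \in H^4_{\fram}(R')$ constructed in \autoref{thm.AIsNotFInjective}, with $\eta \neq 0$ and $F(\eta) = 0$. Its image $\eta \cdot 1$ in $H^4(S_N)$ is nonzero because it lies in a free summand, and $F(\eta\cdot 1) = F(\eta)\cdot 1 = 0$. So $S_N$ is not $F$-injective.

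The main point to check carefully is the Frobenius compatibility of the identification $H^4(S_N) \cong H^4_{\fram}(R') \otimes_{R'} S_N$ on the summand $y^0$. I expect this to follow from computing with a \v{C}ech complex on generators of $\fram R'$, which is functorial under the ring map $R' \to S_N$. The other step needing a citation is ascent of $F$-injectivity to the polynomial extension, and everything else is formal.
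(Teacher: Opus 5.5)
Your argument is correct and is exactly the construction from the proof of \cite[Theorem 2.4]{DS24} that the paper invokes: adjoin $y$, take $f=s$ and $\epsilon=y^N$, and descend non-$F$-injectivity along the finite free local extension $R'\to R'[y]/(y^N+s)$; you verify the descent step by hand instead of quoting \cite{DattaMurayamaPermanencePropertiesFinjectivity}. One slip to fix: ``injectivity on both factors gives injectivity on the tensor product'' is false over the imperfect field $k$ (for instance, Frobenius kills the nonzero element $\beta\otimes 1+1\otimes\beta$ of $K\otimes_k K$), but your conclusion still holds, because Frobenius sends $\sum_{i\geq 1} m_i\otimes y^{-i}$ to $\sum_{i\geq 1} F(m_i)\otimes y^{-pi}$ and the distinct elements $y^{-pi}$ are $k$-linearly independent.
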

\begin{proof}
    Since $F$-injectivity descends under flat local ring homomorphisms and persists after adjoining a variable and localizing (see \cite{DattaMurayamaPermanencePropertiesFinjectivity}), both conditions of \cite[Theorem 2.4]{DS24} are satisfied. The result then follows from \autoref{thm:main-thm} and the construction in \cite[Proof of Theorem 2.4]{DS24}.
\end{proof}

\section{An equational description of \texorpdfstring{$A$}{A}}\label{sec:presentation}
In this section we sketch how to realize the ring $A$ as an explicit quotient of a polynomial ring over $k[s]_{(s)}$. We specialize to the case of $t'=3$ and $t=6$ in the notation of \autoref{sec:construction}. Because we do not see at present a way to give a self-contained equational proof without relying on the results proved in earlier sections, we have elected to omit proofs of several statements, most of which can otherwise be verified in \texttt{Macaulay2} \cite{M2}. Recall the following notation.
\begin{enumerate}
    \item Set $k = \bF_2(a)$ and $D = k[s]_{(s)}$.
    \item In the DVR $D$, set $c=1+s^2$, $b=s^2c$, and $d=a^2+ac$. In $D[x]$, set $y=x^2+x$.
\end{enumerate}
 
 Consider the map $\psi:D[e_0,e_1,e_2,e_3,e_4]\to D[x,u]$ by sending 
\begin{align}
\psi:\begin{cases}
    e_0\mapsto u\\
    e_1\mapsto yu\\
    e_2\mapsto (y^2+by)u\\
    e_3\mapsto (y^3+by^2)u\\
    e_4\mapsto (x^2+s^2 x)(y^2+by+d)u.
\end{cases}\label{eq:psi}
\end{align}
Then let $\mathfrak{c}=\ker(\psi)$, and set 
\[
C:=\frac{D[e_0,e_1,e_2,e_3,e_4]}{\mathfrak{c}}.
\]
We identify $C$ with the subalgebra $D[u,yu,y^2u,y^3 u,x(y^2+by+d)u]\subseteq D[x,u]$ and note that the change of basis $u,yu, y^2 u, y^3z,x(y^2+by+d)u$ to the images in \eqref{eq:psi} has determinant $c\in D^*$. One may then verify that $\mathfrak{c}$ is generated by the size two minors of the matrices \begin{align*}
    \begin{pmatrix}
        e_0&e_1&e_2\\
        e_1+be_0&e_2&e_3+be_2
    \end{pmatrix},\quad \begin{pmatrix}
        e_4+c(e_2+de_0)&e_3+de_1\\
        e_3+de_1+b(e_2+de_0)&e_4
    \end{pmatrix}.
\end{align*} 

We can now describe a presentation for the ring $A$. Under the automorphism $x\mapsto x+s^2$, note that $y\mapsto y+b$. One checks that this induces
\begin{align*}
    \begin{cases}
        e_0\mapsto e_0\\
        e_1\mapsto e_1+be_0\\
        e_2\mapsto e_2\\
        e_3\mapsto e_3+be_2\\
        e_4\mapsto e_4
    \end{cases}
\end{align*}
so in particular this assignment fixes $\mathfrak{c}$. Let $\varphi:C\to C$ be the induced ring automorphism and consider the map 
\begin{align*}
    \rho:D[x_i,y_i,z_i\mid 0\leq i\leq 4] \to C[T_0,T_1],\quad
    \begin{cases}
        x_i\mapsto e_i T_0 T_1^2\\
        y_i\mapsto e_i T_0^2 T_1\\
        z_i\mapsto e_i T_1^3+\varphi(e_i)T_0^3.
    \end{cases}
\end{align*}

\begin{proposition}
    Let $\mathfrak{a}:=\ker(\rho)$.  The ring $A:= D[x_i,y_i,z_i\mid 0\leq i\leq 4]/\mathfrak{a}$ coincides with the section ring constructed in \autoref{sec:construction} for the values $t'=3$ and $t=6$.
\end{proposition}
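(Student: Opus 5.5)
The plan is to identify both sides inside a common ambient ring, namely the section ring of $\sE = \sM \boxtimes \cO_{\bP^1_D}(3)$ on $Y' = C \times_D \bP^1_D$, where $C$ now denotes the scheme of \autoref{sec:construction} (I write $C_{\mathrm{alg}}$ for the ring defined above by generators $e_0,\dots,e_4$ modulo $\mathfrak{c}$). Since $\nu'$ is finite surjective and $\nu'^*\sL^n \cong \sE^n$, the exact sequence \autoref{eq.TwistedSequenceOfSheavesOnX} identifies $H^0(X,\sL^n)$ with the kernel of $H^0(\Delta_{\phi_X})$ inside $H^0(Y',\sE^n)$. So the section ring $A$ is the subring of $\bigoplus_n H^0(Y',\sE^n)$ consisting of sections $u$ with $u|_{C^0} = \alpha_{\sM}(u|_{C^\infty})$, with the normalizations $T_1^{3n}\leftrightarrow 1$ on $C^0$ and $T_0^{3n}\leftrightarrow 1$ on $C^\infty$. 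The proposal then has two parts: identify $C_{\mathrm{alg}}$ with the section ring $\bigoplus_m H^0(C,\sM^m)$ with $t = 6$, and then show that $\rho$ lands exactly on the gluing condition and surjects onto it in each degree.

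\textbf{Step 1: $C_{\mathrm{alg}}$ is the section ring of $\sM$ on $C$.} Use \autoref{eq.TwistedSheafSequenceForC}. On $U$, with $x = X_0/X_1$, the space $H^0(C,\sM^m)$ is the set of polynomials $f\in D[x]$ of degree $\le 6m$ with $f(\beta) = f(\beta+1)$ in $B$. Write $f = g(y) + x\,h(y)$ with $y = x^2+x$, which is invariant under $x\mapsto x+1$. Then the condition becomes $h(y(\beta)) = 0$ in $B$, where $y(\beta) = \beta^2+\beta = (1+s^2)\beta + a$ satisfies a quadratic over $D$. This quadratic should be $y^2 + by + d$, and I would verify that by a direct computation. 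Consequently the invariant sections of degree $6m$ are
\[
D[y]_{\le 3m} \oplus x(y^2+by+d)\,D[y]_{\le 3m-3},
\]
with $u$ recording the degree. Degree counting, plus the fact that the generators in degree $1$ produce everything in every degree, then shows that the given subalgebra of $D[x,u]$ is this section ring. The determinant-$c$ change of basis shows that the variant generators in \eqref{eq:psi} span the same module. Finally, the action of $h$ (namely $x\mapsto x+s^2$) corresponds to $\alpha$ and induces $\varphi$. The claimed generators of $\mathfrak{c}$ are not needed, only that $C_{\mathrm{alg}}\cong\operatorname{im}\psi$.

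\textbf{Step 2: degree $n$ of $A$.} By Künneth, $H^0(Y',\sE^n) = \bigoplus_{i+j=3n} C_{\mathrm{alg},n}\,T_0^iT_1^j$. Restricting to $C^0$ ($T_0 = 0$) picks out the $T_1^{3n}$ coefficient, and restricting to $C^\infty$ picks out the $T_0^{3n}$ coefficient. The gluing condition therefore reads $\varphi(c_{0,3n}) = c_{3n,0}$, up to the precise convention for which side $\sigma$ acts on. Hence
\[
A_n = \bigoplus_{0<i<3n} C_{\mathrm{alg},n}T_0^iT_1^{3n-i} \;\oplus\; \{\,cT_1^{3n}+\varphi(c)T_0^{3n}\,\},
\]
and each $\rho(x_i)$, $\rho(y_i)$, $\rho(z_i)$ visibly lies in $A_1$. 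This gives $\operatorname{im}\rho\subseteq A$, so $\mathfrak{a}=\ker\rho$ and $D[x_i,y_i,z_i]/\mathfrak{a}\cong\operatorname{im}\rho$.

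\textbf{Step 3: surjectivity of $\rho$.} This is the main obstacle. In degree $1$ it is immediate from the description in Step 2. For higher $n$ one must show that $A$ is generated in degree one. For the interior monomials $T_0^iT_1^{3n-i}$ with $0<i<3n$, write them as products of degree-one monomials, at least one of which is mixed. Since $C_{\mathrm{alg}}$ is generated in degree one, the coefficients are fine. The subtle part is the boundary terms $cT_1^{3n}+\varphi(c)T_0^{3n}$. For these I would use products of $z$'s: the product $\prod_k (e_{i_k}T_1^3+\varphi(e_{i_k})T_0^3)$ equals the desired boundary term plus interior terms, and the interior terms are already in $\operatorname{im}\rho$. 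The remaining issue is that products of degree-one elements must give every element of $C_{\mathrm{alg},n}$, which holds because $C_{\mathrm{alg}}$ is generated in degree one and $\varphi$ is a ring map. This requires $t'=3\ge 2$ to ensure that mixed monomials exist in every interior bidegree. I would double-check that argument, and then conclude.
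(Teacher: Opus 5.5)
\textbf{Verdict: there is a genuine but repairable gap, in the interior-term part of Step~3.}

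Your Steps~1 and~2 are correct, and they are more explicit than the paper. The paper only proves that $\operatorname{im}\rho$ equals the subring of $C[T_0,T_1]$ made of arbitrary interior terms plus the twisted boundary terms $hT_1^{3n}+\varphi(h)T_0^{3n}$. It takes for granted that this subring is the section ring. Your computations justify that identification:
\begin{itemize}
\item $H^0(C,\sM^m)=D[y]_{\le 3m}\oplus x(y^2+by+d)D[y]_{\le 3m-3}$. The minimal polynomial of $c\beta+a$ is indeed $y^2+by+d$, since $ab+ac^2=ac(s^2+c)=ac$.
\item The description of $A_n$ as the kernel of the gluing map, via K\"unneth, is right.
\end{itemize}

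\textbf{The gap.} You claim each interior monomial $hT_0^iT_1^{3n-i}$ is a product of degree-one monomials in $\operatorname{im}\rho$. By your own Step~2, the only single monomials in $A_1$ are $eT_0T_1^2$ and $eT_0^2T_1$. The gluing condition forbids $eT_0^3$ or $eT_1^3$ on their own; these occur only inside $eT_1^3+\varphi(e)T_0^3$. So a product of $n$ degree-one monomials from $\operatorname{im}\rho$ always has $T_0$-exponent in $[n,2n]$. Your argument therefore says nothing about $0<i<n$ or $2n<i<3n$; already $hT_0^5T_1$ in degree $2$ is not covered.

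For the same reason, the remark that $t'=3$ provides mixed monomials in every interior bidegree is false. Your boundary-term argument assumes all interior terms are already in $\operatorname{im}\rho$, so it inherits the gap.

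\textbf{The fix.} The missing ingredient is an extra induction, which is exactly what the paper does.
\begin{itemize}
\item Take $2n<i<3n$ (so $n\ge 2$). Write $h=\sum_\ell \varphi(a_\ell)h_\ell$ with $a_\ell\in C_1$ and $h_\ell\in C_{n-1}$. This is possible because $\varphi$ is a graded automorphism and $C$ is generated in degree one.
\item By induction on $n$, the degree-$(n-1)$ interior element $h_\ell T_0^{i-3}T_1^{3n-i}$ lies in $\operatorname{im}\rho$; note $0<i-3<3(n-1)$.
\item Multiply it by $a_\ell T_1^3+\varphi(a_\ell)T_0^3$. The product is the wanted term $\varphi(a_\ell)h_\ell T_0^{i}T_1^{3n-i}$ plus the error term $a_\ell h_\ell T_0^{i-3}T_1^{3n-i+3}$.
\item The error term has $T_0$-exponent $i-3$. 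Either $i-3\in[n,2n]$, or $2n<i-3<i$. In the second case it is handled by an upward induction on $i$ starting at $i=2n+1$.
\item The range $0<i<n$ is symmetric, using a downward induction starting at $i=n-1$.
\end{itemize}
With this in place, your treatment of the boundary terms through products $\prod_k \rho(z_{i_k})$ matches the paper's and completes the proof.
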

\begin{proof}
 Indeed, first note that $C=\bigoplus\limits_{n\geq 0}C_n$ is generated by $C_1$. Consider now a term of the form $h T_0^j T_1^{3n-j}$ where $0<j<3n$. We will show by induction on $n$ that this element belongs to $\im(\rho)$. This is clear if $n\leq j\leq 2n$. For \(2n<j<3n\),  write
\[
h=\sum_\ell\varphi(a_\ell)h_\ell,
\qquad
a_\ell\in C_1,\quad h_\ell\in C_{n-1}.
\]
By induction on $n$, observe that $h_\ell T_0^{j-3}T_1^{3n-j}\in\operatorname{im}(\rho).$ Multiplication by
\[
a_\ell T_1^3+\varphi(a_\ell)T_0^3
\]
produces the desired term with $T_0$-exponent $j$, together with a term of
exponent $j-3$.  Beginning with \(j=2n+1\) and proceeding upward, the latter
term lies either in the range \(n\leq j-3\leq2n\) or is already in $\im(\rho)$. The $j<n$ case is handled similarly by a downward induction.

For an arbitrary $h\in C_n$, write $h$ as a sum of elements of the form $a_1\cdots a_n$ where each $a_i\in C_1$. Then the outer terms of the product
\begin{equation}
    \prod\limits_{i=1}^n (a_iT_1^3+\varphi(a_i)T_0^3)
\end{equation}
are $hT_1^{3n}+\varphi(h)T_0^{3n}$ and the remaining terms have $T_0$-exponent strictly between $0$ and $3n$.
\end{proof}

Now observe that the quotient ring is the Segre product $$A/sA\cong (C/sC) \# \frac{k[v_1,v_2,v_3]}{(v_1^3+v_2^3+v_1v_2v_3)}$$ under the identification $v_1=T_0 T_1^2$, $v_2=T_0^2 T_1$, and $v_3=T_0^3+T_1^3$.

We now describe a \v{C}ech class $\xi\in H^4_{\m}(A)$ in the kernel of the Frobenius action which is inspired by the identifications above with $c^2$ times the \v{C}ech class in \eqref{eq:Cech-class-in-kernel}. The elements $s,x_0, z_4,x_4+z_0$ may be checked in \texttt{Macaulay2} to be a system of parameters for $A$. 

We compute local cohomology of $A$ via the \v{C}ech complex on these elements, letting $\mathfrak{q}$ denote the ideal that they generate. Denote the product without $s$ by $f=x_0z_4(x_4+z_0)$. To streamline the notation, for a $5$-tuple $\uline{g}$ consider the following operators.
\begin{align*}
    \Phi(\uline{g}):=&g_2g_3+g_2g_4+ag_1g_3+(a+c^2)g_1g_4+adg_1^2+a(a+c^2)g_0g_3+acg_0g_4+abdg_0g_1\\
    \Psi(\uline{g}):=& g_1g_3+g_1g_4+cg_0g_4+bg_0g_3+dg_1^2+bdg_0g_1\\
    \Lambda(\uline{g}):=&cg_4+s^2 cg_3+bc^2g_2+c(b^2c+a^2c+d)g_1\\
    \Omega(\uline{g}):=&s^2 c^2 \Phi(\uline{g})+ac^3\Psi(\uline{g})
\end{align*}

Consider the homogeneous element of degree three
\begin{align*}
    \alpha&=y_0\Phi(\uline{x})+z_0\Phi(\uline{y})\in A
\end{align*}
and the \v{C}ech class $\xi = \left[\frac{\alpha}{sx_0z_4(x_4+z_0)}\right]\in\left[H^4_\m(A)\right]_0$.  To verify that $A$ is not $F$-injective from this perspective, it would be enough to show that $F(\xi)=0$ and that $\xi\neq 0$. A method is sketched below to confirm that $F(\xi)=0$, which amounts to finding an integer $r\geq 2$ such that $\alpha^2 s^{r-2}f^{r-2}\in \q_r:=(s^r,x_0^r,z_4^r,(x_4+z_0)^r)$. However, checking that $\xi\neq 0$ purely algebraically would require showing that $\alpha s^{r-1}f^{r-1}\not\in\q_r$ for \emph{every} $r\geq 1$, but we have not verified this.

We isolate the following useful identities which are verifiable in \texttt{Macaulay2}.
\begin{lemma}\label{lem:poly-identity} In the polynomial ring $D[x_i,y_i,z_i\mid 0\le i\le4]$,
we have:
\begin{enumerate}[label=(\roman*)]
    \item\label{lemma:equiv-1} the following congruence holds
    \begin{equation}
        \alpha^2+s^2 c^2\alpha f+ac^3(y_0\Psi(\uline{x})+z_0\Psi(\uline{y}))f\equiv x_0^2R_1+z_4^2R_2\bmod \mathfrak{a}
    \end{equation}
    where
    \begin{align*}
        R_1 = &\Phi(\uline{x})\Phi(\uline{y})+x_4(x_4\Omega(\uline{x})+y_4\Omega(\uline{y}))+z_4^2(\Omega(\uline{y})+y_4\Lambda(\uline{y})+y_0\Lambda(\uline{x}))\\
        R_2=&x_0(x_4+z_0)y_0\Lambda(\uline{y}).
    \end{align*}
    \item\label{lemma:equiv-2}$ac^3(y_0\Psi(\uline{x})+z_0\Psi(\uline{y}))s^2\in \mathfrak{a}+(x_0,z_4,x_4+z_0)$.
\end{enumerate}
\end{lemma}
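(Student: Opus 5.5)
The plan is to check both congruences after applying $\rho$. Since $\mathfrak a=\ker(\rho)$, a polynomial $G$ lies in $\mathfrak a$ exactly when $\rho(G)=0$ in $C[T_0,T_1]$. Moreover $C\subseteq D[x,u]$ via $\psi$, so the test can be carried out in the polynomial ring $D[x,u,T_0,T_1]$. Concretely, substitute $x_i\mapsto \psi(e_i)T_0T_1^2$, $y_i\mapsto\psi(e_i)T_0^2T_1$ and $z_i\mapsto \psi(e_i)T_1^3+\psi(\varphi(e_i))T_0^3$. Here $\psi\circ\varphi$ is $\psi$ precomposed with the substitution $x\mapsto x+s^2$, so $y\mapsto y+b$.

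For \ref{lemma:equiv-1}, move everything to one side. One must then show that
\[
\alpha^2+s^2c^2\alpha f+ac^3(y_0\Psi(\uline{x})+z_0\Psi(\uline{y}))f+x_0^2R_1+z_4^2R_2
\]
maps to zero, recalling that we are in characteristic two. Each of $\Phi,\Psi,\Lambda,\Omega$ is bihomogeneous in the $T$-variables once evaluated on $\uline{x}$ or $\uline{y}$. For example, $\rho(\Phi(\uline{x}))=\Phi(\psi(\uline{e}))\,T_0^2T_1^4$, where $\Phi(\psi(\uline e))$ is a polynomial in $x$ times $u^2$. Using this, I will expand the expression by $T_0^jT_1^{9-j}$-degree. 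Only the $z$-terms mix the two ``sheets'' $\psi$ and $\psi\circ\varphi$.

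Comparing coefficients of each monomial $T_0^jT_1^{18-j}$, the congruence reduces to a small number of identities of the form $P_j(x)=0$ in $D[x]$. In these, $P_j$ is built from $\Phi(\psi(\uline e))$, $\Psi(\psi(\uline e))$, $\Lambda(\psi(\uline e))$ and their $\varphi$-twists, multiplied by $u^{6}$. These are one-variable polynomial identities over $D=\mathbb F_2(a)[s]_{(s)}$. I will verify them by direct expansion, using $y=x^2+x$, $c=1+s^2$, $b=s^2c$ and $d=a^2+ac$. In practice this is exactly the \texttt{Macaulay2} computation: reduce modulo a Gröbner basis of $\mathfrak a$, or equivalently test whether the image under $\rho$ vanishes.

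For \ref{lemma:equiv-2}, I pass to the quotient $\overline A=A/(x_0,z_4,x_4+z_0)$. I aim to show that $ac^3s^2(y_0\Psi(\uline x)+z_0\Psi(\uline y))$ becomes zero there. Because membership in $\mathfrak a+(x_0,z_4,x_4+z_0)$ is not detected by $\rho$ alone, I will instead search for an explicit expression
\[
ac^3s^2(y_0\Psi(\uline x)+z_0\Psi(\uline y)) = x_0G_1+z_4G_2+(x_4+z_0)G_3 \pmod{\mathfrak a}.
\]
Part \ref{lemma:equiv-1} itself gives a hint for finding the $G_i$. I plan to multiply the relevant degree-three elements by $s^2$ and use the Frobenius relation $F([\beta])=a[1]+s^2[\beta]$ from \autoref{eq.FrobeniusActionOnCokerIdPlusIota}. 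This explains why the factor $s^2$ should make the class vanish. The $G_i$ would be found by a Gröbner basis or lift computation in \texttt{Macaulay2}, and then confirmed by checking that $\rho$ kills the difference.

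The main obstacle is part \ref{lemma:equiv-2}, which requires ideal membership modulo parameters rather than a mere kernel check. Finding the cofactors $G_i$ by hand looks impractical, so I would rely on computer algebra and only record the cofactors so the identity can be verified by substitution.
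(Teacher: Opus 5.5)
Your proposal is correct and matches the paper, which gives no argument beyond stating that both identities are verifiable in \texttt{Macaulay2}. Checking that $\rho$ kills the difference in $D[x,u,T_0,T_1]$ for (i), and producing explicit cofactors modulo $\mathfrak a$ (rechecked via $\rho$) for (ii), is exactly that verification.

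For what it is worth, both parts can also be done by hand. Write $E_i=\psi(e_i)/u$, and write $\Phi_E,\Psi_E,\Omega_E,\Lambda_E$ for the forms evaluated at $(E_0,\dots,E_4)$. In (i), after expanding by $T$-monomials, all coefficients except those of $T_0^{14}T_1^4$ and $T_0^8T_1^{10}$ cancel formally. Both remaining coefficients reduce to the single identity $\Phi_E^2=E_4(\Omega_E+E_4\Lambda_E)$ in $D[x]$. This identity follows from $\Psi_E=cxE_4$, $\Phi_E=(y+a)\Psi_E$, $\Omega_E=c^2(s^2y+a)\Psi_E$ and $\Lambda_E=c^2\bigl(y^3+a^2y+x(y^2+by+d)\bigr)$. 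In (ii), explicit cofactors are easy to write down because $E_4=(x^2+s^2x)(y^2+by+d)$ is invariant under $x\mapsto x+s^2$ and vanishes on both glued copies of $Z$. For example, this gives $u^2x^3E_4\in C_2$.
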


\begin{proposition}
    In the polynomial ring $
D[x_i,y_i,z_i\mid 0\le i\le4]
$, we have $\alpha^2 s^2 f^2\in (s^4,x_0^4,z_4^4,x_4^4+z_0^4)+\mathfrak{a}$. Hence, $F(\xi)=0$.
\end{proposition}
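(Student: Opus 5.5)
We work modulo $\mathfrak{a}$ throughout and write $\equiv$ for congruence modulo $\mathfrak{a}$. The goal is to show $\alpha^2 s^2 f^2\in(s^4,x_0^4,z_4^4,(x_4+z_0)^4)+\mathfrak a$; note $x_4^4+z_0^4=(x_4+z_0)^4$ in characteristic two. Set $G:=ac^3(y_0\Psi(\uline{x})+z_0\Psi(\uline{y}))$. Then \autoref{lem:poly-identity}\ref{lemma:equiv-1} reads
\[
\alpha^2\equiv s^2c^2\alpha f+Gf+x_0^2R_1+z_4^2R_2 ,
\]
and we multiply both sides by $s^2f^2$. Each resulting term will be shown to lie in $\q_4:=(s^4,x_0^4,z_4^4,(x_4+z_0)^4)+\mathfrak a$.

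First, the terms $x_0^2R_1s^2f^2$ and $z_4^2R_2s^2f^2$ are immediate. Since $f^2=x_0^2z_4^2(x_4+z_0)^2$, the first is divisible by $x_0^4$ and the second by $z_4^4$, so both lie in $\q_4$.

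Second, consider $Gf\cdot s^2f^2=(s^2G)f^3$. By \autoref{lem:poly-identity}\ref{lemma:equiv-2}, we may write $s^2G=\lambda_0x_0+\lambda_1z_4+\lambda_2(x_4+z_0)$ modulo $\mathfrak a$. Since $f^3=x_0^3z_4^3(x_4+z_0)^3$, each $\lambda_if^3$ is divisible by the fourth power of one of $x_0$, $z_4$, $x_4+z_0$, so this term lies in $\q_4$.

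Third, and this is the main point, consider $s^4c^2\alpha f^3$. This is divisible by $s^4$, so it also lies in $\q_4$. In this way the self-referential term $s^2c^2\alpha f$ is absorbed automatically by the extra factor $s^2$.

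Summing the three pieces gives $\alpha^2s^2f^2\in\q_4$. To conclude $F(\xi)=0$, note that in the \v{C}ech complex on $s,x_0,z_4,x_4+z_0$ we have
\[
F(\xi)=\left[\frac{\alpha^2}{(sf)^2}\right]=\left[\frac{\alpha^2s^2f^2}{(sf)^4}\right].
\]
A class $[g/(sf)^r]$ vanishes when $g\in(s^r,x_0^r,z_4^r,(x_4+z_0)^r)A$. With $r=4$ and $g=\alpha^2s^2f^2$, this is exactly what was shown.

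The only nontrivial input is therefore \autoref{lem:poly-identity}, which we take as given (it is verified computationally). The step needing care is the bookkeeping with part \ref{lemma:equiv-2}. That part holds only modulo the non-power ideal $\mathfrak a+(x_0,z_4,x_4+z_0)$, so it must be paired with $f^3$, which supplies the three missing powers in each variable. This is why we multiply by $s^2f^2$ rather than by a smaller power.
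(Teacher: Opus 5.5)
Your proof is correct and is the paper's argument: multiply the congruence of Lemma (i) by $s^2f^2$, absorb the $x_0^2R_1s^2f^2$, $z_4^2R_2s^2f^2$ and $s^4c^2\alpha f^3$ terms directly, and dispose of $s^2G f^3$ using Lemma (ii) together with $f^3(x_0,z_4,x_4+z_0)\subseteq(x_0^4,z_4^4,(x_4+z_0)^4)$. Your \v{C}ech-complex justification that $F(\xi)=0$, taking $r=4$, is the paper's stated criterion $\alpha^2s^{r-2}f^{r-2}\in\mathfrak{q}_r$.
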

\begin{proof}
    Simply observe that
{\small
    \begin{align*}
        \alpha^2 s^2 f^2 & \equiv s^4 c^2 \alpha f^3 +x_0^2 R_1 s^2 f^2+z_4^2 R_2 s^2 f^2+ac^3(y_0\Psi(\uline{x})+z_0\Psi(\uline{y}))s^2 f^3 \bmod \mathfrak{a} & \text{by \autoref{lem:poly-identity}\ref{lemma:equiv-1}}\\
        &\equiv ac^3(y_0\Psi(\uline{x})+z_0\Psi(\uline{y}))s^2 f^3\bmod \mathfrak{a}+(s^4,x_0^4,z_4^4)\\
        & \equiv 0\bmod \mathfrak{a}+(s^4,x_0^4,z_4^4,x_4^4+z_0^4)& \text{by \autoref{lem:poly-identity}\ref{lemma:equiv-2}}
    \end{align*}
    }
    using the fact that $f^3\cdot (x_0,z_4,x_4+z_0)A\subseteq(x_0^4,z_4^4,(x_4+z_0)^4)A$.
    \end{proof}

\bibliographystyle{skalpha}
\bibliography{MainBib}

\end{document}